\journal{Journal of Mathematical Psychology}
\newcommand{\Z}{{\mathbb Z}}
\newcommand{\lt}{\left}
\newcommand{\rt}{\right}
\newcommand{\wt}{\widetilde}
\newcommand{\eps}{\varepsilon}
\newtheorem{theorem}{Theorem}
\newtheorem{lemma}{Lemma}
\newtheorem{prop}{Proposition}
\newtheorem{cor}{Corollary}
\newcommand{\appendixpage}{}
\begin{document}

\begin{frontmatter}

\title{Classification of dynamics for a two person model of planned behavior}

\author[label1]{Rishi Dadlani\corref{cor1}}
\ead{rdadlani@sas.upenn.edu}
\cortext[cor1]{Corresponding Author}

\author[label2,label3]{John S. McAlister}

\author[label4]{Tahra L. Eissa}

\author[label3,label5,label6]{Nina H. Fefferman}

\affiliation[label1]{organization={University of Pennsylvania, Department of Mathematics}}

\affiliation[label2]{organization={University of Tennessee - Knoxville, Department of Mathematics}}

\affiliation[label3]{organization={National Institute for Modeling Biological Systems}}

\affiliation[label4]{organization={University of Colorado Anschutz school of medicine}}

\affiliation[label5]{organization={US NSF Center for Analysis and Prediction of Pandemic Expansion (APPEX)}}

\affiliation[label6]{organization={School of Mathematical and Natural Sciences, Arizona State University}}

\begin{abstract}
We study a dynamical system modeling the Theory of Planned Behavior (TPB) in which each individual’s behavioral intention $x_i(t)\in(-1,1)$ evolves continuously under an ODE driven by internal attitude, perceived social norms, and perceived behavioral control, while \emph{actions} occur as discrete threshold events: when $x_i$ reaches a fixed threshold it is reset to $0$ and produces a transient “nudge” that jumps to $1$ and then decays exponentially. This yields a hybrid ODE–threshold system with psychologically interpretable parameters. We derive a partial classification in the general case of $n$ individuals.

Focusing on the two-individual case ($n=2$), we obtain explicit formulas for trajectories between action events and derive bounds for first-action times. In the mixed setting where one individual is intrinsically increasing  and the other is not, we identify a scalar invariant $M$ measuring the net effect of one period of excitation. We prove that $M\le 0$ is \emph{equivalent} to a partial-action state (only the intrinsically active individual acts countably infinitely often), while $M>0$ is \emph{equivalent} to full action (both individuals act countably infinitely often). Finally, we demonstrate numerically that these analytic boundaries partition the $(\alpha_1,\alpha_2)$ parameter space with near-perfect agreement, and we provide exploratory simulations suggesting analogous structures for three individuals.
\end{abstract}

\begin{keyword}
    Hybrid dynamical systems, theory of planned behavior, collective action
\end{keyword}

\end{frontmatter}

\section{Introduction}\label{sec:introduction}
Predicting the variation in and interactions among individuals remains the fundamental challenge in understanding collective action as an emergent complex system. Even when individuals face nearly identical external cues, their choices can diverge dramatically and, through nonlinear feedbacks, these micro-level divergences can reshape the macroscopic trajectory of an entire population. In complex systems, where emergent patterns often hinge on isolated events or the precise ordering of early decisions, the inability to anticipate individual actions is not simply a matter of noise but a structural barrier to predicting collective outcomes. This challenge is well recognized across the mathematical and physical sciences from opinion and belief dynamics \cite{degroot1974reaching, hegselmann2002opinion, nowak2019nonlinear}, to contagion-like models of social behavior \cite{granovetter1978threshold, watts2002simple, spears2021social}, to excitable media and other threshold-driven systems whose global dynamics depend sensitively on micro-level activation thresholds \cite{acharya2021neuromorphic, marsh2024emergent}.

Within psychology, the ``Theory of Planned Behavior’’ (TPB; \cite{ajzen1991theory, bosnjak2020theory}) offers a principled framework for describing when individuals choose to act: intention is shaped jointly by internal attitude, perceived social norms, and perceived behavioral control. From a dynamical-systems perspective, TPB suggests a natural analog to excitable systems \cite{ciszak2004coupling,gerstner2002spiking}. Attitudes and perceived norms behave as slow variables, accumulating or dissipating under environmental and social influence; intention is a membrane-potential-like state variable; and action occurs only when this intention crosses a threshold. The subsequent feedback of observed action into perceived norms mirrors classical ‘reset and excitation’ processes in integrate-and-fire neuron models \cite{gerstner2002spiking, desroches2022classification} and in more general excitable systems where local spikes influence the future excitability of neighbors \cite{perc2007stochastic, jirsa2004connectivity}.

Earlier work introduced a simplified dynamical version of TPB using a system of ordinary differential equations to track how individual intentions to act might evolve under these interacting components and a perceptual immediacy bias \cite{schwarze2024planned}. Actions occurred when intention exceeded a fixed threshold; upon acting, individuals generated “nudges” that transiently increased the perceived social norm for others, thereby shifting the conditions under which those individuals might themselves spike in intention to act. This scenario creates a hybrid dynamical system strikingly reminiscent of integrate-and-fire assemblies, with intention evolving continuously under ODE dynamics, punctuated by discrete threshold-crossing events that feed back into the continuous evolution of others. Numerical simulations of that model demonstrated a broad repertoire of behaviors familiar to those studying excitable systems: delayed spikes, rapid cascades, partial activation, oscillatory firing patterns, and long transients that obscure the eventual dynamical regime — even under deterministic, perfectly observed conditions.

In this paper, we take a deliberate step toward a formal mathematical understanding of this system by focusing on the two-individual case, the simplest nontrivial instantiation of the TPB-inspired ODE–threshold hybrid. Our goal is not yet to resolve the full many-body problem, but rather to establish analytic traction on the core temporal structures governing these dynamics. The two-individual system is small enough to permit explicit reasoning, but rich enough to exhibit the characteristic features of thresholded excitable dynamics: timing sensitivity, sequence dependence, and subtle interactions between continuous intention trajectories and discrete action events.

We derive bounds on the first-passage time until the initial action occurs, as well as bounds on subsequent action times and action counts under specific structural conditions. These results parallel classical analyses of spike-timing bounds in excitable models \cite{izhikevich2007dynamical, brette2015philosophy, markram2011history}, but here the excitability landscape is shaped by psychological constructs rather than varying ionic conductance. We also characterize constraints on the ``ordering of action events,’’ again echoing excitable-network phenomena in which the timing of early spikes determines the basin of attraction for the global regime.

Finally, we provide a complete classification of the asymptotic regimes of the two-individual system, showing that the dynamics fall into one of three mutually exclusive categories:
\begin{itemize}
\item ``No action’’, in which intention trajectories never cross the threshold into action;
\item ``Partial action’’, in which at least one individual fires countably infinitely many times but the other does not;
and
\item``Full action’’, where both individuals fire countably many times.
\end{itemize}
This tripartite structure reflects canonical categories in excitable networks (i.e., quiescent, partially active, and persistently active regimes), yet arises here from the interaction of attitude, perceived norms, perceived behavioral control, and perceptual bias.

Establishing these results for the two-individual case provides the first rigorous analytical foundation for TPB-based dynamical models of collective action. While our earlier numerical work highlighted the richness of the model, analytic understanding requires grappling directly with the hybrid excitable nature of the system. The two-individual case allows us to do so cleanly. These proofs and classifications open the way toward studying larger populations (we provide brief numerical simulations of the classification of three-individual systems, but leave the proof of any such dynamics to future work), heterogeneous topologies, multiple perceptual biases (e.g., primacy or saliency), stochastic excitation, and the embedding of these dynamics into networked excitable structures.

Ultimately, by formalizing the dynamical behavior of even the simplest TPB-inspired system, we move toward a rigorous mathematical theory linking internal psychological states, threshold-driven action, and the emergent properties of collective behavior. This theoretical bridge between excitable systems and human decision-making creates space for genuinely predictive modeling of social action grounded in both cognitive realism and complex-systems mathematics.

The model described in \cite{schwarze2024planned} with linear functional forms is a $2n$ hybrid system combining ODEs and difference equations in each variable with non-constant intervals of hybridization. Table \ref{tab:KeyParemetersandVariables} describes each of the parameters. 
\[
\dot x_i = [\sigma_A \alpha_i + \sigma_S(\gamma_i - \mu_S)]\sigma_C (\gamma_i + \mu_C)(1-x_i)(1+x_i)
\]
\[\dot y_i = -ry_i\]
\begin{table}[h]
    \begin{center}
    \begin{tabular}{cccccc}
        \textbf{Symbol} & \textbf{Name} & \textbf{Constraint}\\
        $n$ & Population size & $n \in \Z^+$ \\
        $x_i$ & Behavioral intention & $x_i \in (-1,1)$ \\
        $y_i$ & Nudge effect & $y_i \in [0,1]$ \\
        $\gamma_i$ & Perceived-control coefficient & $\gamma_i \in [0,1]$ \\
        $r$ & Immediacy parameter & $r \geq 0$ \\
        $\tau$ & Action-taking threshold & $\tau \in (0,1) $\\
        $\alpha_i$ & Internal attitude & $\alpha_i \in (-1,1)$ \\
        $\sigma_A$ & Strength of internal attitudes & $\sigma_A \geq 0$ \\
        $\sigma_S$ & Strength of social norms & $\sigma_S \geq 0$ \\
        $\sigma_C$ & Strength of perceived behavioral control & $\sigma_C \geq 0$ \\
        $\mu_S$ & Social norm threshold & $\mu_S \in [0,1]$ \\
        $\mu_C$ & Baseline perceived control & $\mu_ C\geq 0$ \\
    \end{tabular}
    \caption{Key Parameters and Variables}
    \label{tab:KeyParemetersandVariables}
    \end{center}
\end{table}
\[\gamma_i = \frac{1}{n-1}\sum_{j \neq i}y_j\]
That is, $\gamma_i$ is the average of the nudge effects of all but the $i$th individual.

We say individual $i$ \textit{acts} at time $t$ if $x_i(t) \geq \tau$. For infinitesimal $dt$, the system has the following property:
\[x_i(t + dt) = 0 \quad \text{ and } \quad y_i(t + dt) = 1 \]
As there are many variables, the following description in plain English may be useful.

The individual's behavioral intention will be increasing, decreasing, or remain constant depending on the sign of $\sigma_A\alpha_i - \sigma_S\mu_S$ since $\sigma_C\mu_C(1-x_i)(1+x_i)$ is always non-negative for the constraints on each parameter. The logistic term $(1-x_i)(1+x_i)$ allows for, all else being equal, a larger derivative near $0$ and a smaller derivative near $\pm 1$. In the case where an individual acts, their behavioral intention at the time of action discontinuously updates to 0, and their nudge effect discontinuously updates to 1. This induces an update to each other individual's $\gamma_i$, causing an increase in $\dot x_i$. This update exponentially decays according to $\dot y_i$. Note also that the only parameters that are dependent on the particular individual are $x_i,y_i,\gamma_i,\alpha_i$, and only $x_i(0), y_i(0)$ and $\alpha_i$ are pre-set.

Solutions to this hybrid model can be simulated through standard ODE numerical methods with mild modifications to account for the state dependent discontinuities. A simulated solution is pictured in figure \ref{fig:SimulatedSolution}.
\begin{figure}
    \centering
    \includegraphics[width=\linewidth]{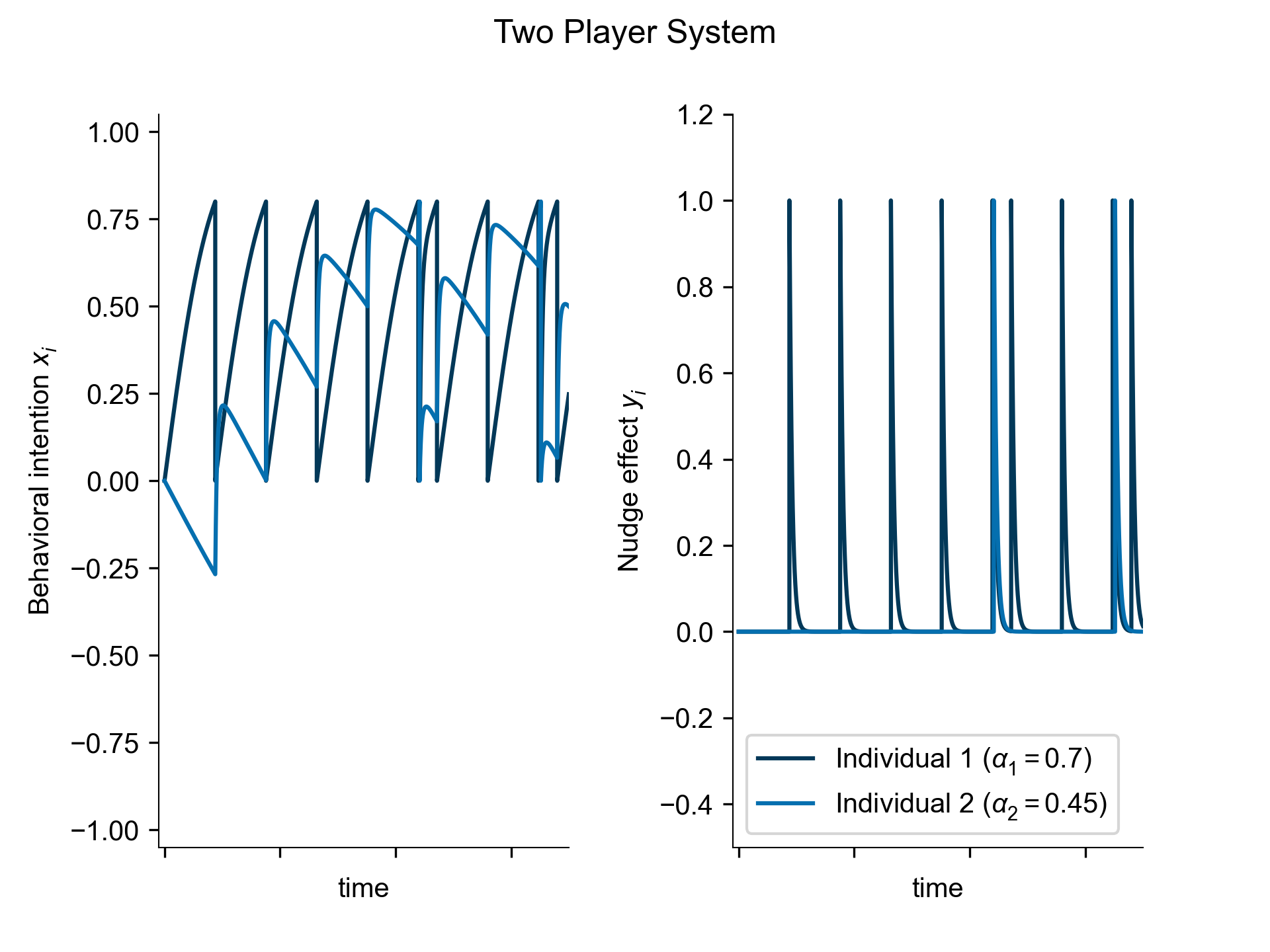}
    \caption{A simulated solution for a two individual case. Individual 1 (dark blue) has an inherently increasing behavioral intention and when this individual reaches the threshold, the behavioral intention is set back to zero, and a nudge (right panel) is created. This nudge increases individual 2's (light blue) behavioral intention momentarily, but not enough to cause individual 2 to act. After several actions by individual 1, individual 2 is finally incited to act, causing a nudge which increases individual 1's behavioral intention. This behavior will continue indefinitely }
    \label{fig:SimulatedSolution}
\end{figure}

\section{Preliminary Results}\label{sec:PreliminaryResults}

\begin{prop}\label{PROP:Cinfty}
    If no individual has acted in a particular interval, then $x_i$ is $C^\infty$ in that interval.
\end{prop}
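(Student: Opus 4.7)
The plan is to show that the right-hand side of the ODE for $x_i$ becomes a smooth function of $(t, x_i)$ on any interval containing no action events, and then invoke standard smooth-dependence theory for ODEs. The only sources of potential non-smoothness in the full hybrid model are the discrete reset events $x_i \mapsto 0$ and $y_i \mapsto 1$; by hypothesis, no such event occurs in the interval of interest, so the $y_j$'s evolve purely under $\dot y_j = -r y_j$.

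First I would solve the $y_j$ equations explicitly on the interval: since no reset occurs, $y_j(t) = y_j(t_0)\, e^{-r(t - t_0)}$ for each $j$, so every $y_j$ is real-analytic, hence $C^\infty$, in $t$. Because $\gamma_i = \tfrac{1}{n-1}\sum_{j \neq i} y_j$ is a finite linear combination of the $y_j$'s, $\gamma_i(t)$ inherits $C^\infty$ (in fact analytic) regularity. Substituting into
\[
\dot x_i = \bigl[\sigma_A \alpha_i + \sigma_S(\gamma_i(t) - \mu_S)\bigr]\,\sigma_C\bigl(\gamma_i(t) + \mu_C\bigr)(1 - x_i)(1 + x_i),
\]
we see that the vector field is a polynomial in $x_i$ whose coefficients are $C^\infty$ functions of $t$, i.e.\ it is jointly $C^\infty$ in $(t, x_i)$ on the strip $(-1,1) \times I$.

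Next I would appeal to the classical regularity result that solutions of an ODE with a $C^\infty$ right-hand side are themselves $C^\infty$ on their interval of existence. Concretely, one can simply bootstrap: $x_i$ is continuous as a solution of the ODE; then $\dot x_i$, being a smooth function of $(t, x_i)$ with $x_i$ continuous, is continuous, so $x_i \in C^1$; repeating, if $x_i \in C^k$ then $\dot x_i$ is a smooth function of $C^k$ inputs and hence $C^k$, so $x_i \in C^{k+1}$. Induction gives $x_i \in C^\infty$ on the interval.

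The main thing to be careful about, rather than any real obstacle, is confirming that the solution stays inside the domain where the argument applies. Since the logistic factor $(1-x_i)(1+x_i)$ vanishes at $x_i = \pm 1$, the open set $(-1,1)$ is forward-invariant under the continuous dynamics, so $x_i(t)$ remains in $(-1,1)$ throughout the interval and the vector field is genuinely smooth where it is being evaluated. No action-event singularities occur by hypothesis, so no further case analysis is needed.
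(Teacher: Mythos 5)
Your proof is correct and follows essentially the same route as the paper's: observe that, absent reset events, the right-hand side of the ODE is built from $C^\infty$ functions and conclude smoothness of the solution. The paper states this in a single sentence, whereas you supply the details it leaves implicit (the explicit exponential form of the $y_j$, the bootstrap from $C^k$ to $C^{k+1}$, and the forward invariance of $(-1,1)$), so your version is a more complete rendering of the same argument.
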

\begin{proof} Without the discontinuous updates, the ODE governing $\dot x_i$ is composed entirely of products and sums of $C^\infty$ functions. \end{proof}
Note this implies that $x_i(t) > \tau$ is impossible unless $t= 0 $ and we took $x_i(0) > \tau$ as an initial condition. Even so, we may as well treat this as $x_i(0) = \tau$ since $x_i(dt) = 0$ in either case.

The following result allows us to follow the behavior of $x_i$ before any individual has acted. Although we rarely use this solution, it is relatively simple. In a following section (Proposition \ref{Prop:explicitSolution}), we will derive a more useful -- and complicated -- expression in a similar way for the case with only two individuals.
\begin{prop}\label{PROP:beforeNudgeSolution}
    While $\gamma_i(t) = 0$, $x_i(0) = u_0$, $x_i$ has an explicit solution in the form
    $\tanh(A_it + \tanh^{-1}(u_0))$ where $A_i = [\sigma_A \alpha_i - \sigma_S \mu_S]\sigma_C\mu_C$.
\end{prop}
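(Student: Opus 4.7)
The plan is to substitute $\gamma_i = 0$ into the governing ODE, observe that the resulting equation is autonomous and separable in $x_i$ alone, and then integrate to recover the stated closed form. With $\gamma_i(t) = 0$, the factor $\sigma_S(\gamma_i - \mu_S)$ collapses to $-\sigma_S \mu_S$ and $\sigma_C(\gamma_i + \mu_C)$ collapses to $\sigma_C \mu_C$, so the equation reduces to
\[
\dot x_i = A_i(1 - x_i)(1 + x_i) = A_i(1 - x_i^2),
\]
with $A_i$ as defined in the statement.

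Next I would separate variables to obtain $dx_i/(1 - x_i^2) = A_i\, dt$. Since the ODE is studied on the interval $x_i \in (-1,1)$ (guaranteed by the state-space constraint for $x_i$), the integrand $1/(1-x_i^2)$ is exactly $\frac{d}{dx}\tanh^{-1}(x)$, giving
\[
\tanh^{-1}(x_i(t)) - \tanh^{-1}(u_0) = A_i t
\]
upon integrating from $0$ to $t$ and applying $x_i(0) = u_0$. Solving for $x_i(t)$ by applying $\tanh$ to both sides yields the claimed form $x_i(t) = \tanh(A_i t + \tanh^{-1}(u_0))$.

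There is essentially no hard step here; the only things to verify are that (i) the separation is legitimate, which follows because $x_i \in (-1,1)$ keeps the denominator nonzero, and (ii) $\tanh^{-1}$ is a valid antiderivative on this interval, which is standard. Uniqueness of the trajectory is guaranteed by Proposition \ref{PROP:Cinfty}, which ensures $x_i$ is $C^\infty$ on any action-free interval and hence the solution produced by separation is \emph{the} solution. Note the formula automatically encodes the three qualitative behaviors described after the table: $A_i > 0$ drives $x_i$ monotonically toward $+1$, $A_i < 0$ drives it toward $-1$, and $A_i = 0$ leaves $x_i$ constant at $u_0$, matching the sign discussion following the model equations.
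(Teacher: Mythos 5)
Your proposal is correct and follows essentially the same route as the paper: substitute $\gamma_i = 0$, separate variables, and integrate $1/(1-x_i^2)$ to obtain $\tanh^{-1}$. The only cosmetic difference is that the paper reaches $\tanh^{-1}$ via an explicit partial-fraction decomposition and the logarithmic form, whereas you invoke the antiderivative directly; the substance is identical.
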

\begin{proof} While no individual has acted, the system is uncoupled, thus every individual acts autonomously. As such, and for convenience, we drop the index $i$ on $x_i$, $\alpha_i$, $A_i$. With the given conditions: \[
\frac{dx}{dt} = A(1-x)(1+x) = A(1-x^2)\] which (for the purpose of solving the separable ODE) gives us
\[\frac{dx}{1-x^2} = Adt
\]
Integrating,
\[ \int \frac{dx}{1-x^2} = At + C\]
where $C$ is a constant.
\[
\int \frac{dx}{1-x^2} = \int \lt(\frac{1}{2(x+1)}  - \frac{1}{2(x-1)} \rt)dx = \frac{1}{2}\int \frac{dx}{x+1} - \frac{1}{2}\int \frac{dx}{x-1}
\]
which gives
\[\frac{1}{2}\ln\lt(\lt|\frac{1+x}{1-x}\rt|\rt) = At + C\]
we are able to drop the absolute value bars due to the constraints on $x$. Using the initial condition $x(0) = u_0$, \[
\frac{1}{2}\ln\lt(\frac{1+u_0}{1-u_0}\rt) = \tanh^{-1}(u_0) = C
\] \[
   t  =  \frac{1}{A}( \tanh^{-1}(x) - \tanh^{-1}(u_0))
\]
Solving for $x$, we get $\tanh(At + \tanh^{-1}(u_0)) = x$, the desired result. \end{proof}

In the proposition below, recall that $\tau$ is the threshold for action.
\begin{cor}\label{period}
    Suppose that for individual $i$, $x_i(0) = u_0$, $\gamma_i(t) = 0$ for all $t \leq \frac{1}{A}( \tanh^{-1}(\tau) - \tanh^{-1}(u_0))$ and $\sigma_A\alpha_i - \sigma_S\mu_S > 0$.  Then, the time until individual $i$'s first action is $T = \frac{1}{A}( \tanh^{-1}(\tau) - \tanh^{-1}(u_0))$.
\end{cor}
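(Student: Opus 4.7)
The plan is to show that the corollary is essentially a direct computational consequence of Proposition \ref{PROP:beforeNudgeSolution}, together with the monotonicity that follows from the sign hypothesis on $\sigma_A\alpha_i - \sigma_S\mu_S$. The quantity $A = A_i = [\sigma_A\alpha_i - \sigma_S\mu_S]\sigma_C\mu_C$ is strictly positive under the assumption, since $\sigma_C,\mu_C \geq 0$ (and $\sigma_C\mu_C > 0$ is needed for the dynamics to be nontrivial; if $\sigma_C\mu_C = 0$ then $\dot x_i \equiv 0$ and the statement is vacuous or the formula blows up, so I would include a brief remark that we are in the nondegenerate case).

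First, I would invoke Proposition \ref{PROP:beforeNudgeSolution} on the interval $[0,T]$. The hypothesis that $\gamma_i(t) = 0$ for all $t \leq T$ is precisely the regime in which that proposition applies, so on this interval we have the explicit formula
\[
x_i(t) = \tanh\bigl(A t + \tanh^{-1}(u_0)\bigr).
\]
Since $A > 0$ and $\tanh$ is strictly increasing on $\R$, $x_i$ is strictly increasing on $[0,T]$. Therefore the first-action time, i.e.\ the infimum of $t \geq 0$ at which $x_i(t) = \tau$, is unique (if it exists in this interval).

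Next, I would set $x_i(T) = \tau$ and solve for $T$ by applying $\tanh^{-1}$, which is valid because $\tau \in (0,1)$ lies in the domain of $\tanh^{-1}$ and the argument of $\tanh$ is in $\R$. This immediately yields
\[
A T + \tanh^{-1}(u_0) = \tanh^{-1}(\tau),
\qquad
T = \tfrac{1}{A}\bigl(\tanh^{-1}(\tau) - \tanh^{-1}(u_0)\bigr),
\]
matching the claimed expression. By the monotonicity noted above this is indeed the \emph{first} time $x_i$ reaches $\tau$, so individual $i$ first acts at $T$.

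There is essentially no hard step here: the proposition does all the analytic work, and the corollary is its algebraic inversion. The only point worth verifying carefully is the self-consistency of the hypothesis: the statement assumes $\gamma_i(t) = 0$ up to the very time $T$ we are computing, so one should note that no circularity occurs because $T$ is defined purely in terms of the parameters $A$, $\tau$, and $u_0$, and the hypothesis is an assumption placed on the external dynamics (no other individual acts early enough to perturb $\gamma_i$ before $T$). Implicitly we also need $u_0 < \tau$ so that $T > 0$; otherwise $x_i(0) \geq \tau$ and the individual acts immediately, consistent with the convention noted after Proposition \ref{PROP:Cinfty}.
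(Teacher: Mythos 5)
Your proof is correct and follows essentially the same route as the paper: both invoke the explicit solution from Proposition \ref{PROP:beforeNudgeSolution} and evaluate it at $x = \tau$ to solve for $T$. Your additional remarks on monotonicity (guaranteeing this is the \emph{first} crossing), the nondegeneracy of $\sigma_C\mu_C$, and the need for $u_0 < \tau$ are sensible refinements of the same argument rather than a different approach.
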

\begin{proof}
Until $x$ reaches the threshold $\tau$, it is $C^\infty$, so we are able to evaluate the expression \[
t = \frac{1}{A}( \tanh^{-1}(x) - \tanh^{-1}(u_0))
\] derived in Proposition \ref{PROP:beforeNudgeSolution} on $x = \tau$ to get \[
T = \frac{1}{A}( \tanh^{-1}(\tau) - \tanh^{-1}(u_0))
\]
as desired. \end{proof}

This says that we can find how long it takes an individual to act in the absence of nudge effects given the individual has increasing intention. As in the preceding proposition, we rarely use this exact expression, just referring to the time as $T$ in the future, but it is nice knowing there is such an expression if one decided to do explicit computations or examples.

We will prove a related corollary that applies while $\gamma_i(t) \neq 0$, but first, we need the lemma below, which is a re-framing of an elementary result in analysis.
\begin{lemma}\label{LEMMA:derivativeDef}
    Suppose $x_i(t_0) = x_j(t_0)$ and $\dot x_i(t_0) > \dot x_j(t_0)$, then there is a $\delta > 0$ and an interval where $t \in (t_0, t_0 + \delta)$ satisfies $x_i(t) > x_j(t)$
\end{lemma}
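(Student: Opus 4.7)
The plan is to reduce this to the standard one-variable fact that a function with strictly positive derivative at a point is strictly increasing through that point. I would define the auxiliary function $f(t) = x_i(t) - x_j(t)$, which is differentiable at $t_0$ since $x_i$ and $x_j$ are (by Proposition \ref{PROP:Cinfty}, provided $t_0$ is not itself an action event; the statement of the lemma implicitly assumes the derivatives exist at $t_0$). Then $f(t_0) = 0$ by the equality hypothesis and $\dot f(t_0) = \dot x_i(t_0) - \dot x_j(t_0) > 0$ by the inequality hypothesis, so it suffices to show $f(t) > 0$ on some right-interval $(t_0, t_0 + \delta)$.

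Next I would invoke the $\varepsilon$–$\delta$ definition of the derivative directly. Setting $\eps = \tfrac{1}{2}\dot f(t_0) > 0$, there is a $\delta > 0$ such that for every $t$ with $0 < |t - t_0| < \delta$,
\[
\left| \frac{f(t) - f(t_0)}{t - t_0} - \dot f(t_0) \right| < \eps,
\]
which, using $f(t_0) = 0$, rearranges to
\[
\frac{f(t)}{t - t_0} > \dot f(t_0) - \eps = \tfrac{1}{2}\dot f(t_0) > 0.
\]
For $t \in (t_0, t_0 + \delta)$ the denominator $t - t_0$ is positive, so $f(t) > 0$, i.e.\ $x_i(t) > x_j(t)$, which is exactly the conclusion.

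There is no real obstacle here; the only subtlety is ensuring that $\dot x_i(t_0)$ and $\dot x_j(t_0)$ are genuinely well-defined, which is why an appeal to Proposition \ref{PROP:Cinfty} (on any interval containing $t_0$ during which no action occurs) is the natural justification. Since the lemma is stated as a re-framing of the elementary analysis fact, I would keep the proof to essentially the two short paragraphs above and not attempt to strengthen the hypotheses.
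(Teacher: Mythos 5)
Your proof is correct and is essentially the same argument as the paper's: both apply the definition of the derivative to the difference $x_i - x_j$ at $t_0$, take $\eps$ equal to half the derivative gap, and conclude positivity of the difference on a right-neighborhood. The only cosmetic difference is that you phrase the derivative definition via the difference quotient while the paper uses the equivalent little-$o$ expansion.
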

\begin{proof} First, if $x_i(t_0)  \geq \tau$, then $x_i(t_0 + dt) = 0$, otherwise, $x_i(t_0 + dt) < \tau$. Note that by definition of the derivative,
\[
x(t_0 + dt) = \dot x(t_0)dt + x(t_0) + o(dt) \text{ as } dt \to 0
\]
By definition,
\[
o(dt)/dt \to 0 \text{ as } dt \to 0
\]
so set $o(dt)/dt = \eps(dt)$, so  \[
o(dt) = \eps(dt)dt \text{ as } dt \to 0
\]
Hence, \[
x_i(t_0 + dt) - x_j(t_0 + dt) = (\dot x_i(t_0) - \dot x_j(t_0)) dt + \eps(dt) dt \text{ as } dt \to 0
\]
Call $\dot x_i(0) - \dot x_j(0) = c > 0$, since $\eps(dt) \to 0$, there exists $\delta > 0$ where\[
|dt| < \delta \implies |\eps(dt)| < c/2
\]
Then, for $dt \in (0,\delta)$\[
x_i(t_0  +dt) - x_j(t_0 + dt) = c dt +  \eps(dt)dt \geq c dt -  |\eps(dt)|dt > \frac{c}{2}dt > 0
\]
That is, for $t \in (t_0, t_0+ \delta)$, it must be that $x_i(t) > x_j(t)$. \end{proof}
This is a useful tool in comparing individuals as the system evolves since it still applies when nudge effects are present. The following is an extension of Corollary 1.

\begin{cor}\label{periodBound}
Suppose that $x_i(0) = 0$, and $\sigma_A\alpha_i - \sigma_S\mu_S > 0$. Then, the time until individual $i$'s first action is less than or equal to $T = \frac{1}{A}\tanh^{-1}(\tau)$
\end{cor}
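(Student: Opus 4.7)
The plan is to compare the actual trajectory against the nudge-free baseline from Proposition~\ref{PROP:beforeNudgeSolution}. Intuitively, nudges can only accelerate the ascent of $x_i$ to the threshold, because $\gamma_i(t) \geq 0$ enters $\dot x_i$ through two factors that are monotonically non-decreasing in $\gamma_i$ in this regime. Hence the first-action time with nudges present should be bounded above by the closed-form value from Corollary~\ref{period} taken at $u_0 = 0$, which is $T = \frac{1}{A}\tanh^{-1}(\tau)$.

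Let $T^*$ denote the first time that $x_i$ reaches $\tau$. By the hypothesis $\sigma_A\alpha_i - \sigma_S\mu_S > 0$ together with $\gamma_i(t) \geq 0$, both bracketed factors of $\dot x_i$ are strictly positive on $[0, T^*]$, so $x_i$ is strictly increasing there and $x_i(t) \in [0, \tau) \subset (-1,1)$. The key pointwise estimate is then
\[\dot x_i(t) = [\sigma_A\alpha_i - \sigma_S\mu_S + \sigma_S\gamma_i(t)]\,\sigma_C(\gamma_i(t) + \mu_C)(1-x_i^2(t)) \geq A\,(1-x_i^2(t)),\]
valid wherever $\dot x_i$ is defined, obtained by discarding the nonnegative contributions of $\gamma_i$ in each of the two bracketed factors.

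The next step is to integrate this inequality over $[0, T^*]$. Because $\gamma_i$ jumps upward whenever another individual acts, $\dot x_i$ is only piecewise continuous; however, $x_i$ itself remains continuous throughout. On each sub-interval between actions of other individuals, $x_i$ is $C^\infty$ and standard separation of variables applies, so summing the contributions of the sub-intervals up to $T^*$ gives
\[\tanh^{-1}(\tau) - \tanh^{-1}(0) = \int_0^{T^*} \frac{\dot x_i(t)}{1 - x_i^2(t)}\, dt \;\geq\; A\, T^*,\]
which rearranges to $T^* \leq T$.

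The only mildly delicate point is this piecewise smoothness, which I would handle either via the telescoping-sum argument above or by invoking Lemma~\ref{LEMMA:derivativeDef} pathwise to rule out the trajectory $x_i(t)$ ever dropping strictly below the baseline $\tanh(At)$ before reaching $\tau$. I do not anticipate a substantive obstacle beyond this bookkeeping, as the result is essentially a monotonicity statement in $\gamma_i$.
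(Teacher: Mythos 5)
Your proof is correct, but it takes a genuinely different route from the paper's. Both arguments start from the same pointwise bound $\dot x_i \geq A(1-x_i^2)$, obtained by discarding the nonnegative $\gamma_i$ contributions. The paper then introduces the auxiliary nudge-free solution $x^*$ with $\dot x^* = A(1-(x^*)^2)$, $x^*(0)=0$, and argues by contradiction: if $x_i$ acted after $T$, then $x^*$ would cross $\tau$ first, forcing a first crossing time $x_i(t')=x^*(t')$, at which Lemma~\ref{LEMMA:derivativeDef} and the intermediate value theorem yield a contradiction. You instead integrate the differential inequality directly: since $\frac{d}{dt}\tanh^{-1}(x_i(t)) = \dot x_i/(1-x_i^2) \geq A$ on each sub-interval of smoothness and $x_i$ is continuous, telescoping gives $\tanh^{-1}(\tau) \geq A T^*$ and hence $T^* \leq T$. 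Your quantitative approach buys two things: it sidesteps the piecewise-comparison machinery entirely (the jumps in $\gamma_i$ only help the inequality, as you note), and it avoids a delicate point in the paper's argument, namely that Lemma~\ref{LEMMA:derivativeDef} requires \emph{strict} inequality of derivatives, which fails at $t=0$ where $\gamma_i(0)=0$ makes $\dot x_i(0) = \dot x^*(0)$. The paper's comparison argument, on the other hand, is reused structurally elsewhere (e.g., Proposition~\ref{smallestAlpha}), so it earns its keep as a template. One small piece of bookkeeping you should make explicit: defining $T^*$ presupposes that a first action occurs; this follows from the same estimate, since running your integral to any $t > T$ under the assumption $x_i < \tau$ forces $\tanh^{-1}(x_i(t)) \geq At > \tanh^{-1}(\tau)$, a contradiction.
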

\begin{proof} Since $\gamma_i$ is always non-negative, \[
\dot x_i = [\sigma_A \alpha_i + \sigma_S(\gamma_i - \mu_S)]\sigma_C (\gamma_i + \mu_C)(1-x_i)(1+x_i) \geq A_i(1-x_i)(1+x_i)
\]
for all $t$. Define the system with only one individual given by $\dot x^* := A_i(1-x_i)(1+x_i)$ and $x^*(0) = 0$. Suppose for contradiction that individual $i$ acts after time $T$, then $x_i, x^*$ are both continuous until $T$. Applying Lemma \ref{LEMMA:derivativeDef}, we see that for $t \in (0, \delta)$, it must be that $x_i(t) > x^*(t)$. By supposition $x^*$ acts before $x_i$, so the intermediate value theorem tells us there is some (first) time $\delta < t' < T$ where $x^*(t') = x_i(t')$, and applying Lemma \ref{LEMMA:derivativeDef} again gives us the desired contradiction since $x^*$ could not possibly act before $x_i$ if $x^* \leq x_i$ \end{proof}

This tells us that nudge effects can only shorten the time until an individual's first action, which should be intuitively clear as a nudge effect can only increase the derivative. Furthermore, a re-framing of the contrapositive of this tells us something useful: if individual $i$ never acts, then $\sigma_A\alpha_i - \sigma_S\mu_S \leq 0$. This is because this term controls the sign of $\dot x_i$, so if it were positive, that individual must eventually act.

\begin{cor}\label{Cor:countablyInfinite}
{Suppose that for individual $i$, $\sigma_A\alpha_i - \sigma_S\mu_S > 0$. Then, individual $i$ will act countably infinitely many times after any fixed time $t_{start} \geq 0$}
\end{cor}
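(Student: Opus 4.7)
The plan is to apply Corollary \ref{periodBound} iteratively: after every action of individual $i$, the state $x_i$ is reset to $0$, and the corollary guarantees another action within the uniform time window $T = \frac{1}{A_i}\tanh^{-1}(\tau)$, so actions accumulate indefinitely.

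First I would establish that at least one action occurs strictly after $t_{start}$. There are two cases. If an action of individual $i$ has already occurred at some time $t_0 \leq t_{start}$ with no subsequent action before $t_{start}$, then at $t_0^+$ we have $x_i = 0$, and the comparison argument of Corollary \ref{periodBound} (using $\gamma_i \geq 0$ to bound $\dot x_i \geq A_i(1-x_i)(1+x_i)$) gives an action by time $t_0 + T \leq t_{start} + T$; if this action lies in $(t_{start}, t_0 + T]$ we are done, and otherwise it precedes $t_{start}$ and we restart the argument with this later action. If no action has yet occurred by $t_{start}$, then by Proposition \ref{PROP:Cinfty} the trajectory is $C^\infty$ up to $t_{start}$; I would then apply a trivial extension of Corollary \ref{periodBound} with initial state $x_i(t_{start}) \in (-1,\tau)$ --- the identical comparison argument against $\dot x^* = A_i(1-x^{*2})$ with $x^*(t_{start}) = x_i(t_{start})$ yields a first action by time $t_{start} + \frac{1}{A_i}\bigl(\tanh^{-1}(\tau) - \tanh^{-1}(x_i(t_{start}))\bigr)$, which is finite.

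Next I would induct. Suppose actions $t_{start} < t_1 < t_2 < \cdots < t_k$ have been constructed. Immediately after $t_k$, the reset rule gives $x_i(t_k^+) = 0$, so Corollary \ref{periodBound}, applied with time origin shifted to $t_k$, produces a next action $t_{k+1}$ with $t_{k+1} - t_k \leq T$. This yields an infinite strictly increasing sequence of distinct action times after $t_{start}$, which is the desired countably infinite set.

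The main obstacle is purely a matter of framing rather than new analysis: Corollary \ref{periodBound} is stated for a single initial condition $x_i(0)=0$, and one must be confident that it remains valid when restarted after a reset in the presence of active nudges $\gamma_i \geq 0$ and when replaced by an arbitrary subthreshold initial value. Both adaptations follow from the same lower bound $\dot x_i \geq A_i(1-x_i)(1+x_i)$ together with Lemma \ref{LEMMA:derivativeDef}, so no genuinely new machinery is required; the argument is essentially a clean invocation of the monotone comparison already in hand.
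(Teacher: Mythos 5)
Your iteration of Corollary \ref{periodBound} after each reset reproduces the first half of the paper's proof, and your explicit handling of an arbitrary $t_{start}$ (restarting the comparison from $x_i(t_{start})\in(-1,\tau)$ when no action has yet occurred) is, if anything, more careful than the paper, which takes that re-initialization for granted. So the ``infinitely many actions'' portion of your argument is sound and follows essentially the paper's route.

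However, you have omitted the second half of the paper's proof, and it is not cosmetic. The statement asserts that the individual acts \emph{countably} infinitely many times, and your closing claim --- that the constructed increasing sequence ``is the desired countably infinite set'' --- only shows that the set of action times \emph{contains} a countably infinite subset, not that the full set of action times is countable. The paper closes this by bounding the derivative from above, $\dot x_i \leq [\sigma_A\alpha_i + \sigma_S - \sigma_S\mu_S]\sigma_C(1+\mu_C)$ (using $\gamma_i\le 1$ and $(1-x_i)(1+x_i)\le 1$), so that after each reset to $0$ the individual needs at least $\tau/\bigl([\sigma_A\alpha_i + \sigma_S - \sigma_S\mu_S]\sigma_C(1+\mu_C)\bigr)$ units of time to climb back to $\tau$; this uniform positive lower bound on inter-action gaps makes the set of action times discrete, hence countable. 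The same bound is also load-bearing for your own construction: without it, your sequence $t_1<t_2<\cdots$ (whose gaps you bound only from \emph{above} by $T$) could a priori accumulate at a finite Zeno time, past which the hybrid trajectory --- and hence the state you restart from --- is not obviously well defined, and your ``restart with the latest action before $t_{start}$'' step could in principle cycle through infinitely many such actions without terminating. Adding the derivative upper bound and the resulting gap lower bound closes the gap; everything else you have is essentially the paper's argument.
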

\begin{proof} $t$ is unbounded above, we know from Corollary \ref{periodBound} that individual $i$ will act at time $t^* \leq T$, then $x_i(t^* + dt)  =0$. We can then apply Corollary \ref{periodBound} at $t^* + dt$ again to see that $x_i$ will act again. We can apply this argument every time that individual $i$ acts, so it must act infinitely many times. In the rest of this proof, we show that an individual cannot act an uncountable number of times. To do so, we show that the interval between actions is bounded from below. Consider
\[
\dot x_i = [\sigma_A \alpha_i + \sigma_S(\gamma_i - \mu_S)]\sigma_C (\gamma_i + \mu_C)(1-x_i)(1+x_i)
\]
The presence of non-zero $\gamma_i$ only results in a larger derivative, and $\max \gamma_i = 1$. $(1-x_i)(1+x_i)$ is maximized at 0, where it is 1. Then, \[
\dot x_i \leq [\sigma_A \alpha_i + \sigma_S - \sigma_S\mu_S]\sigma_C (1 + \mu_C)
\]
Regardless of $x_i(0)$, the first time this individual acts at time $t$, we will have $x_i(t + dt) = 0$, then the variable interval between actions $T_{var}$ is bounded as follows \[
T_{var} \geq \frac{\tau}{[\sigma_A \alpha_i + \sigma_S - \sigma_S\mu_S]\sigma_C (1 + \mu_C)}
\]
and the RHS is constant. \end{proof}

The last result here gives us a condition for a state where all individuals must be acting.

\begin{prop}\label{smallestAlpha}
  Suppose we have $x(0) = y(0) = 0$ for all individuals. Suppose the individual $i$ with the smallest $\alpha$ acts at time $t$, then all other individuals must also act. Furthermore, all other individuals could not have acted later than individual $i$.
\end{prop}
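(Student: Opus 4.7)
My plan is to exploit the symmetry of the initial conditions by pairwise comparing individual $i$ (the one with the smallest $\alpha$) to each other individual $j$. The central observation I would use is that as long as neither $i$ nor $j$ has yet acted, $y_i(s) = y_j(s) = 0$, so
\[
\gamma_i(s) - \gamma_j(s) = \frac{1}{n-1}(y_j(s) - y_i(s)) = 0,
\]
and this identity is preserved even when some third individual $k \neq i, j$ acts: such a firing contributes the same additive jump in $y_k$ to both $\gamma_i$ and $\gamma_j$, and the subsequent exponential decay of $y_k$ is shared between them.

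Next, let $t$ denote $i$'s first action time and fix an arbitrary $j \neq i$. If $j$ has already acted at some time $s < t$, both conclusions are immediate, so the interesting case is when $j$ has not acted on $[0, t)$. Writing $\gamma(s)$ for the common value $\gamma_i(s) = \gamma_j(s)$, both $x_i$ and $x_j$ satisfy on $[0, t]$ the same non-autonomous ODE
\[
\dot w = [\sigma_A \alpha + \sigma_S(\gamma(s) - \mu_S)]\sigma_C (\gamma(s) + \mu_C)(1 - w^2),
\]
with the same initial condition $w(0) = 0$, differing only in whether $\alpha = \alpha_i$ or $\alpha = \alpha_j \geq \alpha_i$.

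I would then split into two subcases. If $\alpha_j = \alpha_i$, uniqueness of solutions to this locally Lipschitz ODE forces $x_j \equiv x_i$ on $[0, t]$, so $j$ acts simultaneously with $i$. If $\alpha_j > \alpha_i$, then at any time $s$ where $x_j(s) = x_i(s)$ one computes
\[
\dot x_j(s) - \dot x_i(s) = \sigma_A \sigma_C (\gamma(s) + \mu_C)(1 - x_i(s)^2)(\alpha_j - \alpha_i) > 0.
\]
Applied at $s = 0$, Lemma \ref{LEMMA:derivativeDef} gives $x_j > x_i$ on a right neighborhood of $0$; a standard minimal-crossing argument (at any putative first equality $s^\ast \in (0, t]$, the left-derivative of $x_j - x_i$ would be $\leq 0$, contradicting the strict positivity just computed) extends this to all of $(0, t]$. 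But then $x_j(t) > x_i(t) = \tau$, so by continuity and the intermediate value theorem $j$ must have crossed $\tau$ before $t$, contradicting the subcase's assumption.

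The main obstacle I anticipate is careful bookkeeping across the jump discontinuities in $\gamma_i, \gamma_j$ caused by actions of third individuals $k \neq i, j$ on $[0, t)$. The comparison is transparent only because those jumps contribute identically to $\gamma_i$ and $\gamma_j$, keeping $x_i$ and $x_j$ continuous and piecewise-smooth and ensuring that the derivative calculations in the minimal-crossing step yield strictly positive values (from either side) at any tentative first equality point. Once this is verified, combining the two subcases gives both conclusions: every $j \neq i$ has acted by time $t$, and no $j$ acts later than $i$.
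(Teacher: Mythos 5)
Your proposal is correct and follows essentially the same route as the paper: compare $x_i$ and $x_j$ pairwise, note that $\dot x_j - \dot x_i = \sigma_A(\alpha_j-\alpha_i)\sigma_C(\gamma+\mu_C)(1-x^2) > 0$ whenever the two trajectories meet, and use Lemma \ref{LEMMA:derivativeDef} together with a first-crossing (intermediate value theorem) argument to conclude $x_j \geq x_i$ up to $i$'s first action, forcing $j$ to reach $\tau$ no later than $i$. The one place you are actually more careful than the paper is the generalization beyond $n=2$: your explicit observation that $\gamma_i - \gamma_j = \tfrac{1}{n-1}(y_j - y_i) = 0$ persists through the jump discontinuities caused by third parties is exactly the bookkeeping the paper compresses into its closing remark that ``for every individual who hasn't yet acted, $\gamma_i$ is the same.''
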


\begin{proof} We start by considering $n=2$. If $\alpha_1 = \alpha_2$, then the dynamics for each individual are identical and the claim is trivial. Then, without loss of generality, let $\alpha_1 < \alpha_2$ and for all $t \geq 0$, $x_2(t) < \tau$. Let $x_1(0) = x_2(0) = y_1(0) = y_2(0) = 0$. Suppose for contradiction that there exist time(s) $t^* \geq 0$ where $x_1(t^*) = \tau$, and for simplicity, say that $t_0$ is the smallest such. Note first that with the initial conditions above,
\[
\dot x_i(0) = [\sigma_A \alpha_i - \sigma_S \mu_S]\sigma_C\mu_C\]
This implies that $\dot x_1(0) < \dot x_2(0)$. Applying Lemma \ref{LEMMA:derivativeDef}, we see that all $t \in (0, \delta)$ satisfy $x_2(t) > x_1(t)$.

When neither individual has acted, the system is continuous. Since $x_1(t) < \tau$ for all $0 \leq t \leq t_0$, and $x_2(t) < \tau$ for all $ t \geq 0$, both by assumption, there exists at least one time where $x_1(t) = x_2(t)$ by the intermediate value theorem. Call $t' \leq t_0$ the smallest one. Note that $y_i(t) = 0$ for $t < t_0$, so $\dot x_2(t') > \dot x_1(t')$. Again, we use Lemma \ref{LEMMA:derivativeDef} to see that $dt \in (0,\delta')$ satisfies \[
x_2(dt + t') > x_1(dt + t')
\]
thus $x_1(t) > x_2(t)$ is impossible for any $t \leq t_0$, which is a contradiction. So we have shown that $x_1(t) = \tau$ for any $t$ implies $x_2(p) = \tau$ for some $p \leq t$ given the conditions in the supposition. It is easy to show that this generalizes to the case with $n$ individuals -- just apply the same argument above to each of $x_i, \; i \geq 3$. We can do this because for every individual who hasn't yet acted, $\gamma_i$ is the same, and they all had $x_i(0) = 0$. \end{proof}
We can trivially relax the initial conditions somewhat while following the same proof. $x_2(0) \geq x_1(0)$ and $y_2(0) \geq y_1(0)$ are sufficient rather than exactly having $x_1(0) = x_2(0) = y_1(0) = y_2(0) = 0$. We could reword the same proposition with the claim that an individual with a larger $\alpha$ takes its first action at least as fast as an individual with a smaller $\alpha$.

\section{Classification of Dynamics}\label{sec:ClassificationOfDynamics}

We split the general behavior of the system into three cases that are explored qualitatively in \cite{schwarze2024planned}. These are the no action state, the partial action state, and the full action state. In the no action state, no individual ever acts. In the second, some of the individuals act, and if an individual ever acts, it acts countably infinitely many times after any fixed time $t_{start}$. In the full action state, all of the individuals act countably infinitely many times after any fixed time $t_{start}$. For all of these, we make the natural assumptions that $x_i(0) < \tau$ and $y_i(0) = 0$.

\begin{prop}\label{Prop:noAction}
Suppose $\gamma_i(t) = 0$ for all $t$, $x_i(0) < \tau$, and  $\sigma_A\alpha_i - \sigma_S\mu_S \leq 0$, then individual $i$ will never act.
\end{prop}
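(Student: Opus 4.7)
The plan is to reduce the hypothesis on $\gamma_i$ directly to the ODE and show that $x_i$ is monotonically non-increasing, so it cannot reach $\tau$. First I would substitute $\gamma_i(t)\equiv 0$ into the governing equation to get
\[
\dot x_i = [\sigma_A \alpha_i - \sigma_S\mu_S]\,\sigma_C\mu_C\,(1-x_i)(1+x_i) = A_i(1-x_i^2),
\]
where $A_i = [\sigma_A \alpha_i - \sigma_S \mu_S]\sigma_C\mu_C$ as defined in Proposition \ref{PROP:beforeNudgeSolution}. Since $\sigma_C,\mu_C \ge 0$ by the constraints in Table \ref{tab:KeyParemetersandVariables}, the hypothesis $\sigma_A\alpha_i - \sigma_S\mu_S \le 0$ yields $A_i \le 0$.

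Next I would observe that so long as individual $i$ has not acted, $x_i(t)\in(-1,1)$ (Proposition \ref{PROP:Cinfty} gives smoothness, and the logistic factor prevents escape from the open interval), hence $(1-x_i^2)>0$. Combined with $A_i\le 0$ this gives $\dot x_i(t)\le 0$ for every $t$ in the interval before any action. Therefore $x_i$ is non-increasing, so $x_i(t)\le x_i(0)<\tau$ for all $t\ge 0$. In particular $x_i(t)$ never reaches the action threshold, which is exactly the conclusion.

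As a cross-check one may invoke Proposition \ref{PROP:beforeNudgeSolution} explicitly: with $\gamma_i(t)\equiv 0$ its hypotheses hold for all $t$, so $x_i(t)=\tanh(A_i t + \tanh^{-1}(x_i(0)))$. Since $\tanh$ is increasing and $A_i t \le 0$ for $t\ge 0$, one has $x_i(t)\le \tanh(\tanh^{-1}(x_i(0))) = x_i(0) < \tau$, confirming the same conclusion.

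There is no real obstacle here; the only subtlety worth stating carefully is that the \emph{degenerate} subcases $\sigma_C=0$, $\mu_C=0$, or equality $\sigma_A\alpha_i=\sigma_S\mu_S$ all collapse to $A_i=0$, in which case $x_i$ is literally constant at $x_i(0)<\tau$, so the non-strict inequality in the hypothesis is handled uniformly by the monotonicity argument.
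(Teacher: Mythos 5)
Your proof is correct and follows essentially the same route as the paper's (which simply observes that $\dot x_i \le 0$ for all $t$, so $x_i$ can never reach $\tau$); you merely spell out the substitution of $\gamma_i \equiv 0$ and add the explicit-solution cross-check. No gaps.
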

\begin{proof} $\dot x_i \leq 0$ for all $t$, so $x_i(t) = \tau > 0$ is impossible. \end{proof}

\begin{cor}\label{Cor:noActionState}
    A system falls under the ``no action'' classification (Fig \ref{fig:NoAction}) if and only if the individual $i$ with the largest $\alpha$ satisfies $\sigma_A\alpha_i - \sigma_S\mu_S \leq 0$ and $x_j(0) < \tau$ for all individuals.
\end{cor}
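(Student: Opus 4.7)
The plan is to prove the biconditional by contradiction in each direction, focusing in each case on what must happen at the earliest potential action time.

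For the ``if'' direction, suppose the largest-$\alpha$ individual $i$ satisfies $\sigma_A\alpha_i - \sigma_S\mu_S \leq 0$ and $x_j(0) < \tau$ for every $j$. Because $\alpha_i$ is maximal, the sign condition $\sigma_A\alpha_j - \sigma_S\mu_S \leq 0$ holds for every $j$. Set $t^\star := \inf\{t \geq 0 : x_j(t) \geq \tau \text{ for some } j\}$ and suppose, for contradiction, that $t^\star < \infty$. On $[0,t^\star)$ no individual has acted, so $y_j \equiv 0$ and hence $\gamma_j \equiv 0$ on that interval. The argument inside Proposition \ref{Prop:noAction} then applies verbatim there: $\dot x_j \leq 0$, so $x_j(t) \leq x_j(0) < \tau$ throughout $[0,t^\star)$. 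The $C^\infty$ regularity from Proposition \ref{PROP:Cinfty} lets me take the one-sided limit at $t^\star$, yielding $x_j(t^\star) \leq x_j(0) < \tau$ for every $j$ and contradicting the definition of $t^\star$.

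For the ``only if'' direction, assume the system is in the no-action regime. If some $x_j(0) \geq \tau$, then $j$ acts at $t=0$ by definition, contradicting no-action; so $x_j(0) < \tau$ for every $j$. Now suppose toward contradiction that the largest-$\alpha$ individual $i$ has $\sigma_A\alpha_i - \sigma_S\mu_S > 0$. I extend Corollary \ref{periodBound} from $x_i(0)=0$ to arbitrary $x_i(0) \in (-1,\tau)$: Proposition \ref{PROP:beforeNudgeSolution} furnishes the uncoupled comparison trajectory $x^\star(t) := \tanh(A_i t + \tanh^{-1}(x_i(0)))$, which reaches $\tau$ at the finite time $\frac{1}{A_i}(\tanh^{-1}(\tau) - \tanh^{-1}(x_i(0)))$. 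Because $\gamma_i(t) \geq 0$ always, the inequality $\dot x_i \geq A_i(1-x_i)(1+x_i)$ holds along the coupled trajectory, and standard ODE comparison (equivalently, the contradiction-plus-Lemma \ref{LEMMA:derivativeDef} argument used in the proof of Corollary \ref{periodBound}) gives $x_i(t) \geq x^\star(t)$ up to $i$'s first action. Hence $i$ must act in finite time, contradicting no-action.

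The main obstacle is the mild generalization of Corollary \ref{periodBound} to allow $x_i(0) \neq 0$, and in particular $x_i(0) < 0$: the original proof starts both trajectories at the shared point $0$, whereas here I need them to start at the possibly negative true initial condition. The explicit-solution and comparison steps from Proposition \ref{PROP:beforeNudgeSolution} and Corollary \ref{periedBound} carry over essentially unchanged, with $\tanh^{-1}(0)=0$ replaced by $\tanh^{-1}(x_i(0))$. Beyond this, the proof is a routine assembly of results already established in Section \ref{sec:PreliminaryResults}.
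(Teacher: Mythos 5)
Your proof is correct, and it diverges from the paper's in a way worth noting. For the ``if'' direction the paper argues by contradiction through Proposition \ref{smallestAlpha}: if anyone acts, the first actor must be the largest-$\alpha$ individual $i$, who then has $\gamma_i\equiv 0$ up to that time, contradicting Proposition \ref{Prop:noAction}. You instead observe that maximality of $\alpha_i$ (with $\sigma_A\geq 0$) forces $\sigma_A\alpha_j-\sigma_S\mu_S\leq 0$ for \emph{every} $j$, so that before the first action all nudges vanish and every $x_j$ is non-increasing, whence no one can ever reach $\tau$. Your route is more elementary and, in fact, more robust: Proposition \ref{smallestAlpha} as stated requires $x(0)=y(0)=0$ for all individuals (relaxable only to ordered initial data per the remark following it), whereas the corollary assumes only $x_j(0)<\tau$, so the paper's invocation of it carries a hypothesis mismatch that your direct argument avoids. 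For the ``only if'' direction you and the paper both take the contrapositive of Corollary \ref{periodBound}, but you explicitly supply the generalization from $x_i(0)=0$ to arbitrary $x_i(0)\in(-1,\tau)$ (replacing $\tanh^{-1}(0)$ by $\tanh^{-1}(x_i(0))$ in the comparison trajectory), a step the paper leaves implicit; you also dispatch the $x_j(0)<\tau$ clause of the biconditional, which the paper's proof does not address. The one cosmetic blemish is the typo \verb|\ref{periedBound}| in your final paragraph, which should be \verb|\ref{periodBound}|.
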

\begin{proof} If the system is classified in the no action state, then the contrapositive of Corollary \ref{periodBound} tells us that  $\sigma_A\alpha - \sigma_S\mu_S < 0$ is satisfied for each individual. For the other direction, $\alpha_i$ satisfies the condition above. Suppose for contradiction that another individual acts. Pick the individual that acted first, then Proposition \ref{smallestAlpha} tells us that this must be individual $i$ as it had the largest $\alpha$. But if $i$ was the first individual to act, then $\gamma_i =0$, which contradicts Proposition \ref{Prop:noAction}. \end{proof}

\begin{figure}
    \centering
    \includegraphics[width=0.8\linewidth]{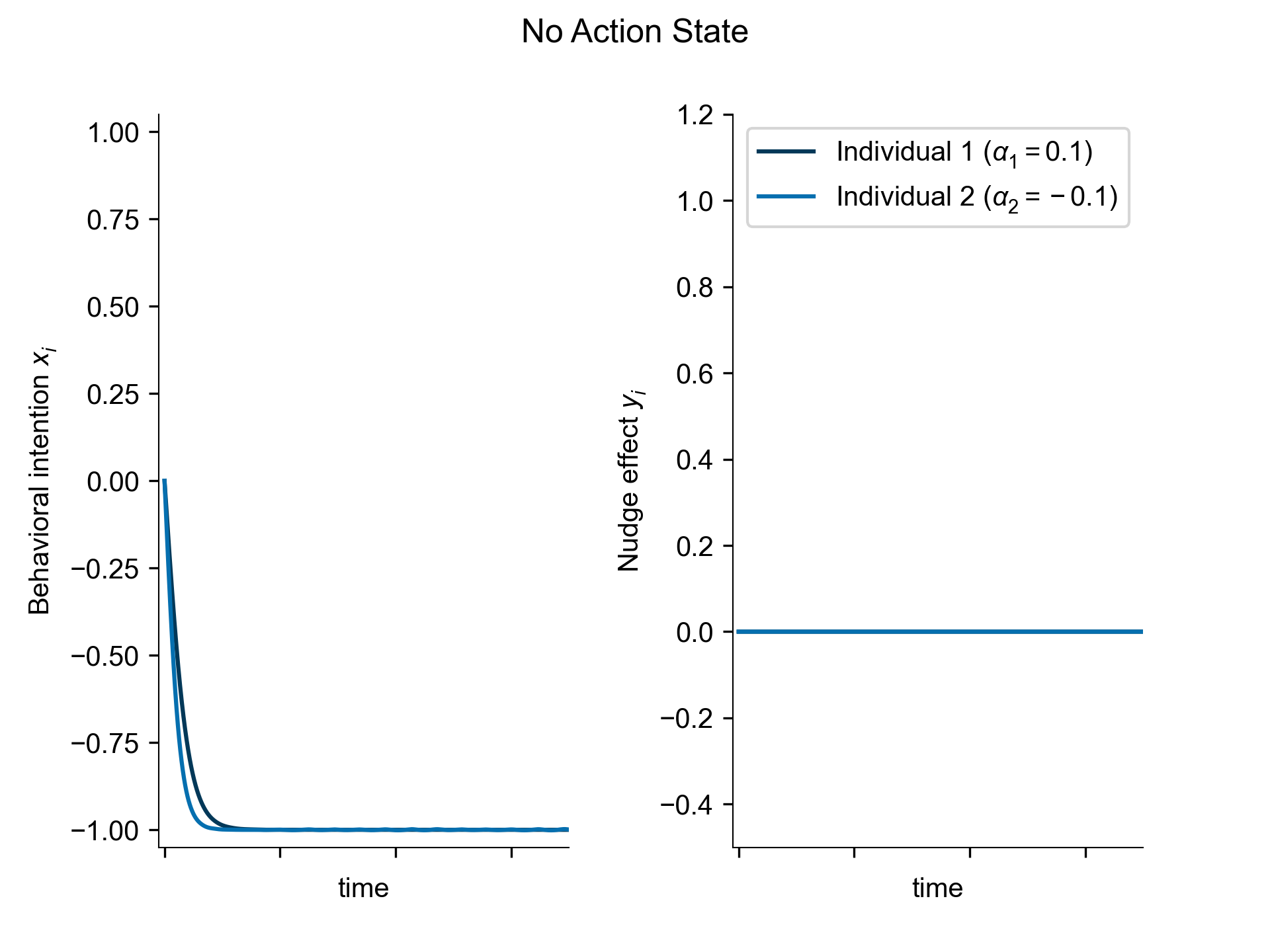}
    \caption{The No action state in which both individuals have initially decreasing behavioral intentions}
    \label{fig:NoAction}
\end{figure}

\begin{prop}\label{Prop:fullActionEasy}
    If $\sigma_A\alpha_i - \sigma_S\mu_S > 0$ for all $i$, then the system is classified under the full action state (Fig \ref{fig:FullActionEasy}
    ).
\end{prop}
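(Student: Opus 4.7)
The plan is to reduce the proposition to Corollary \ref{Cor:countablyInfinite}. That corollary asserts that whenever a single individual $i$ satisfies $\sigma_A\alpha_i - \sigma_S\mu_S > 0$, it acts countably infinitely many times after any fixed $t_{start} \geq 0$. The hypothesis of Proposition \ref{Prop:fullActionEasy} is precisely that this inequality holds for \emph{every} $i$, so the proof consists of invoking Corollary \ref{Cor:countablyInfinite} once for each individual and concluding via the definition of the full action state given at the start of Section \ref{sec:ClassificationOfDynamics}.

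First I would verify that the reasoning behind Corollary \ref{Cor:countablyInfinite} still goes through individual-by-individual in the fully coupled $n$-individual setting. This is immediate: the only way other individuals can influence $x_i$ is through $\gamma_i \geq 0$, which, as noted just after Corollary \ref{periodBound}, can only increase $\dot x_i$ and therefore only shorten the time until the next action. The uniform lower bound on the inter-action interval used in Corollary \ref{Cor:countablyInfinite} — derived from $\dot x_i \leq [\sigma_A \alpha_i + \sigma_S - \sigma_S\mu_S]\sigma_C(1+\mu_C)$ — also holds unchanged, which ensures that the set of action times for individual $i$ is at most countable.

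Next I would address the mild gap between Corollary \ref{periodBound}'s assumption $x_i(0) = 0$ and the standing assumption $x_i(0) < \tau$ from the start of Section \ref{sec:ClassificationOfDynamics}. For the first action this requires only a small tweak: since $\sigma_A\alpha_i + \sigma_S(\gamma_i - \mu_S) \geq \sigma_A\alpha_i - \sigma_S\mu_S > 0$ and $(1 - x_i)(1 + x_i) \geq (1 - \tau)(1 + \tau)$ whenever $x_i \in [x_i(0), \tau]$, the derivative $\dot x_i$ is bounded below by a positive constant on this range, so the first action occurs in finite time. After each action the state resets to $x_i = 0$, and Corollary \ref{periodBound} then applies verbatim to bound every subsequent inter-action interval from above by the finite time $T = \tfrac{1}{A_i}\tanh^{-1}(\tau)$.

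Concatenating these observations for each $i$ yields that every individual acts countably infinitely many times after any fixed $t_{start}\geq 0$, which is exactly the full action classification. I do not expect a genuine obstacle here — the technical work has already been done in Corollary \ref{Cor:countablyInfinite}, and this proposition is essentially a bookkeeping corollary that packages the single-individual statement into the many-individual classification.
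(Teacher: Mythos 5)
Your proposal is correct and follows exactly the route the paper takes: the paper's entire proof is ``This follows directly from Corollary \ref{Cor:countablyInfinite}.'' Your additional care about the coupling through $\gamma_i$ and the initial condition $x_i(0)<\tau$ versus $x_i(0)=0$ is a reasonable (and slightly more thorough) fleshing-out of the same one-line argument.
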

\begin{proof} This follows directly from Corollary \ref{Cor:countablyInfinite}. \end{proof}

\begin{figure}
    \centering
    \includegraphics[width=0.8\linewidth]{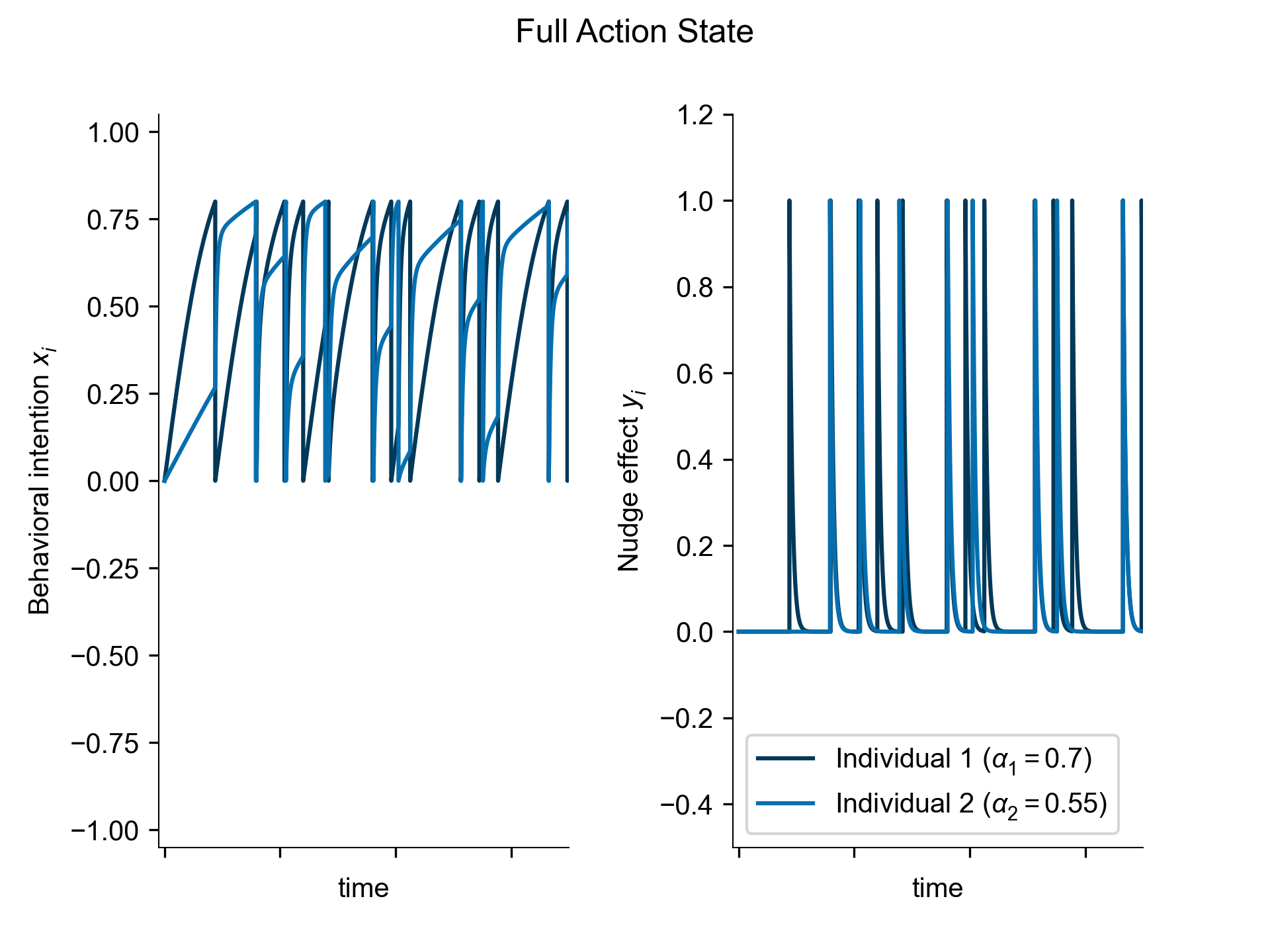}
    \caption{The full action state in which both individuals have initially increasing behavioral intentions and so both will continue to act indefinitely}
    \label{fig:FullActionEasy}
\end{figure}

It is analytically intractable to obtain sufficient and non-trivial necessary conditions for the partial action and full action statement in the case of $n$ individuals, so from here, we are going to classify the general behavior of the system in the $n=2$ case. We can cover most of the cases from the results earlier. Now, we focus on when, without loss of generality, $\sigma_A\alpha_1 - \sigma_S\mu_S > 0$, but $\sigma_A\alpha_2 - \sigma_S\mu_S \leq 0$. We now must develop tools to classify systems of this type into the partial action state and full action state.

In the following sections, we often make basic, routine assumptions (hypotheses), which are listed below: \[
\begin{cases}
    \sigma_A\alpha_1 - \sigma_S\mu_S > 0 & (h_1)\\
    \sigma_A\alpha_2 - \sigma_S\mu_S \leq 0 & (h_2) \\
    x_1(0)= y_1(0) = y_2(0) =0  & (h_3) \\
    x_2 \text{ has not yet acted} & (h_4) \\
\end{cases}
\]
It should be clear why we're generally assuming the first two -- this is the last case we need to consider for the classification. We will eventually relax the fourth assumption and show that if $x_2$ acts, then it will act again. The third assumption is explained in the following section. Also note that $(h_4)$ implies that $x_2(0) < \tau$.

\subsection{Derivation of an Explicit Solution}\label{subsec:Derivation}

Suppose $(h_1), (h_3), (h_4)$. From Corollary \ref{period}, call the first time that $x_1$ acts as $T$. Before $x_2$ has acted, which is $(h_5)$, we call $T$ the \textit{period} of $x_1$, as $x_1$ acts every at positive integer multiple of $T$. This is why we need the third assumption -- otherwise, the useful notion of a period becomes more difficult to handle.

The first tool we derive is an explicit expression for $x_2$, valid until either one period of $x_1$ elapses or $x_2$ acts.

\begin{prop}\label{Prop:explicitSolution}
    Suppose $(h_1), (h_3), (h_4)$. Then, we can write $x_2$'s position after $t$ time, given a starting position of $u_0$ at starting time $kT$, as $x_2(t, u_0) = \tanh(At+Be^{-rt} + Ce^{-2rt} + D(u_0))$. This is valid in $[kT, \min((k+1)T, t^*)]$, where $t^* > t'$, $k \geq 1$ and $x_2(t^*) = \tau$.

    $A$ is as in Proposition 2, $B=-\frac{1}{r}[(\sigma_A\alpha_2-\sigma_S\mu_S)\sigma_C+\sigma_S\sigma_C\mu_C]$, $C=-\frac{\sigma_S\sigma_C}{2r}$, and $D(u_0)=\tanh^{-1}(u_0)-B-C$.
\end{prop}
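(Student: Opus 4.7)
The plan is to derive the formula by directly integrating the separable ODE for $x_2$ on the interval of interest, after first pinning down the explicit form of $\gamma_2$.

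First I would use $(h_4)$ together with $n=2$ to observe that $y_2 \equiv 0$ throughout the interval, so $\gamma_1 \equiv 0$, and hence $x_1$ evolves in the uncoupled regime of Proposition \ref{PROP:beforeNudgeSolution}. Combined with $(h_1)$ and $x_1(0)=0$ from $(h_3)$, Corollary \ref{period} gives that $x_1$ acts precisely at the times $T, 2T, 3T, \ldots$. Between consecutive actions $y_1$ satisfies $\dot y_1 = -ry_1$ with reset boundary value $y_1(kT)=1$, so on $[kT,(k+1)T)$, writing $s = t-kT$ for the shifted time, we have $y_1(s) = e^{-rs}$. Since $n=2$ the definition of $\gamma_2$ collapses to $\gamma_2 = y_1$, so on this interval the ODE for $x_2$ becomes
\[
\dot x_2 = \sigma_C\bigl[(\sigma_A\alpha_2 - \sigma_S\mu_S) + \sigma_S e^{-rs}\bigr]\bigl[\mu_C + e^{-rs}\bigr](1-x_2^2).
\]

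Next I would integrate, mirroring the calculation in Proposition \ref{PROP:beforeNudgeSolution}. The left-hand side $\int dx_2/(1-x_2^2) = \tanh^{-1}(x_2)$ is valid because the logistic factor leaves $(-1,1)$ invariant, so absolute value bars drop as before. Expanding the right-hand side as a polynomial in $e^{-rs}$ produces three terms: a constant equal to $\sigma_C\mu_C(\sigma_A\alpha_2-\sigma_S\mu_S) = A$, a coefficient of $e^{-rs}$ equal to $(\sigma_A\alpha_2-\sigma_S\mu_S)\sigma_C + \sigma_S\sigma_C\mu_C$, and a coefficient of $e^{-2rs}$ equal to $\sigma_S\sigma_C$. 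Integrating each term in $s$ yields $As + Be^{-rs} + Ce^{-2rs}$ with exactly the $B$ and $C$ claimed (the minus signs and factors of $1/r$ and $1/(2r)$ arising from the exponential antiderivatives). Applying the initial condition $x_2(s{=}0) = u_0$ determines the constant of integration as $D(u_0) = \tanh^{-1}(u_0) - B - C$, and solving for $x_2$ gives the stated tanh formula.

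Finally I would check the validity window. The closed form for $y_1$ breaks the instant $x_1$ fires again at $s = T$, and the separation-of-variables derivation presupposes that $x_2$ itself has not yet reset, which fails at $t^*$; together these yield validity on $[kT,\min((k+1)T,t^*)]$. The main obstacle, which is really bookkeeping rather than anything conceptual, is confirming that under $(h_1),(h_3),(h_4)$ the reset lattice for $y_1$ really is $\{kT: k\ge 1\}$ (so that $y_1(t) = e^{-r(t-kT)}$ interval by interval and a fresh $u_0$ at each interval boundary lets the same formula propagate); once this piecewise structure is nailed down, the remainder is routine integration and matching of coefficients.
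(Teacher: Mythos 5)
Your proposal is correct and follows essentially the same route as the paper: identify $\gamma_2 = y_1 = e^{-rs}$ on each inter-action interval, separate variables, integrate the resulting polynomial in $e^{-rs}$ term by term, and invert via $\tanh$. The extra care you take in justifying that the reset lattice for $y_1$ is exactly $\{kT : k \ge 1\}$ (via $y_2 \equiv 0$ under $(h_3)$--$(h_4)$) is handled in the paper's surrounding discussion rather than inside the proof itself, but it is the same argument.
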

\begin{proof} In the two individual system, immediately after $x_1$ acts, the system is
\begin{equation}
    \begin{split}
        \dot{x}_1&=[\sigma_A\alpha_1+\sigma_S(y_2-\mu_S)][\sigma_C(y_2+\mu_C)](1-x_1^2)\\
        \dot{x}_2&=[\sigma_A\alpha_2+\sigma_S(y_1-\mu_S)][\sigma_C(y_1+\mu_C)](1-x_2^2)\\
        \dot y_1 &= -ry_1\\
        \dot y_2 &= -ry_2
    \end{split}
\end{equation}
And the initial conditions are $x_1(0)=0,\; x_2(0)= u_0,\; y_1(0)=1,\;y_2(0)= \wt y $, where $u_0$ is the position of $x_2$ when $x_1$ acts, and $\wt y$ similarly. Notice that for our constraints, the system is decoupled, so we need only consider $x_2$ and $y_1$. Furthermore $y_1(t)=e^{-rt}$ so we have a single separable ODE. The computation follows through grouping and keeping track of the constants:
\[\dot{x}_2=[C_1+C_2e^{-rt}+C_3][C_4e^{-rt}+C_5](1-x_2^2)\]
where $C_1=\sigma_A\alpha_2, \;C_2 = \sigma_S,\; C_3=-\sigma_S\mu_S,\; C_4=\sigma_C$, and $C_5=\sigma_C\mu_C$. Simplifying:
\[\dot x_2 = [C_6+C_7e^{-rt}+C_8e^{-2rt}](1-x_2^2)\]
where $C_6= A_2 = (\sigma_A\alpha_2-\sigma_S\mu_S)(\sigma_C\mu_C),\; C_7 = (\sigma_A\alpha_2-\sigma_S\mu_S)\sigma_C+ \sigma_S\sigma_C\mu_C,\; C_8=\sigma_S\sigma_C$. For brevity we write $x_2$ just as $x$.
\begin{equation*}
    \begin{split}
        \frac{1}{1-x^2}\frac{dx}{dt}&=C_6+C_7e^{-rt}+C_8e^{-2rt}\\
        \int\frac{1}{1-x^2}\frac{dx}{dt}dt&=\int(C_6+C_7e^{-rt}+C_8e^{-2rt})dt\\
        \int\frac{1}{1-x^2}{dx}&=\big[C_6t-\frac{C_7}{r}e^{-rt}-\frac{C_8}{2r}e^{-2rt}\big]+C_0
    \end{split}
\end{equation*}
where $C_9=\frac{1}{r}[(\sigma_A\alpha_2-\sigma_S\mu_S)\sigma_C+\sigma_S\sigma_C\mu_C]$, $C_{10}=\frac{\sigma_S\sigma_C}{2r}$, and $C_0$ is the constant of integration. Through partial fraction decomposition on the LHS, we obtain the following
\[\frac{1}{2}\ln\lt(\frac{1+x}{1-x}\rt)=\big[C_6t-\frac{C_7}{r}e^{-rt}-\frac{C_8}{2r}e^{-2rt}\big]+C_0\]
As $x$ is bounded in $(-1,1)$, we need not use the absolute values. Using this, we can solve for $C_0(u_0)$ through $x(0)=u_0$, which gives us
\[C_0(u_0)=\frac{1}{2}\ln\lt(\frac{1+u_0}{1-u_0}\rt)+\frac{C_7}{r}+\frac{C_8}{2r}\]
We rearrange the equation again to get that
\[\frac{1+x}{1-x}=\exp\Big(2\big(\underbrace{C_{6}t-\frac{C_{7}}{r}e^{-rt}-\frac{C_{8}}{2r}e^{-2rt}+C_0(u_0)}_{h(t,u_0)}\big)\Big)\]
The left-hand side of the function is injective from $(-1,1)\to(0,\infty)$ so we can solve for the unique solution
\[x=\frac{e^{2h(t,u_0)}-1}{e^{2h(t,u_0)}+1}=\tanh\lt(h(t,u_0)\rt)\]
So the solution is written as follows
\[x(t;u_0) =\tanh(At+Be^{-rt}+Ce^{-2rt}+D(u_0))\]
where $A= A_2 = (\sigma_A\alpha_2-\sigma_S\mu_S)(\sigma_C\mu_C),\; B=-\frac{1}{r}[(\sigma_A\alpha_2-\sigma_S\mu_S)\sigma_C+\sigma_S\sigma_C\mu_C],\; C=-\frac{\sigma_S\sigma_C}{2r},\; D(u_0)=\frac{1}{2}\ln\lt(\frac{1+u_0}{1-u_0}\rt)-B-C=\tanh^{-1}(u_0)-B-C$ \end{proof}
As a reminder, this solution describes the position of $x_2$ at time $kT + t$, where $k\geq 1$, given starting position $u_0$ at time $kT$. This is valid until whichever happens first: $(k+1)T$ or a time where $x_2$ crosses the threshold.

\subsection{A Useful Invariant}\label{subsec:Invariant}

\begin{prop}\label{Prop: invariant}
    Fix $(*)$ as one of $<, \, =, \text{ or } >$.  Take $(h_1)$-$(h_4)$ and suppose $x_2$ is not allowed to act, that is, observe the dynamics of $x_2$ as if this particular individual did not experience the threshold. Further suppose that \[x_2(0; u_0) = u_0 \; (*) \;  x_2(T;u_0)\] Then, taking any $v_0 \in (-1,1)$, $x_2$ will satisfy \[
    x_2(0; v_0) = v_0 \; (*) \;  x_2(T;v_0)
    \]
\end{prop}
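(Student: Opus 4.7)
The plan is to reduce the proposition to an identity that is manifestly independent of the starting value by using the explicit formula from Proposition \ref{Prop:explicitSolution}. Since the initial data $u_0$ enters the expression for $x_2(t;u_0)$ only through the constant $D(u_0) = \tanh^{-1}(u_0) - B - C$, and this constant sits additively inside a $\tanh$, I expect the dependence on $u_0$ to cancel out after a single algebraic manipulation.

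Concretely, I would first write out
\[
x_2(T;u_0) = \tanh\bigl(AT + Be^{-rT} + Ce^{-2rT} + D(u_0)\bigr),
\]
which is valid on $[0,T]$ under $(h_1)$--$(h_4)$ with the convention that $x_2$ is not reset, i.e.\ we continue the smooth flow past any threshold crossing. Since $\tanh\colon\R\to(-1,1)$ is a strictly increasing bijection, the relation $u_0\,(*)\,x_2(T;u_0)$ is preserved under $\tanh^{-1}$, giving
\[
\tanh^{-1}(u_0) \;(*)\; AT + Be^{-rT} + Ce^{-2rT} + D(u_0).
\]
Substituting $D(u_0) = \tanh^{-1}(u_0) - B - C$ and subtracting $\tanh^{-1}(u_0)$ from both sides yields the equivalent condition
\[
0 \;(*)\; AT + B(e^{-rT}-1) + C(e^{-2rT}-1).
\]

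The right-hand side depends only on the parameters $A,B,C,r,T$, not on the initial value. Thus the inequality (or equality) is a property of the parameters alone. Running the same computation in reverse for any other $v_0 \in (-1,1)$ produces exactly the condition $v_0\,(*)\,x_2(T;v_0)$, completing the proof. The only thing to be careful about is justifying that the explicit formula remains valid for the whole interval $[0,T]$ when we forbid the reset; this is immediate because Proposition \ref{Prop:explicitSolution}'s derivation used only the continuous ODE on $x_2$ together with $y_1(t)=e^{-rt}$, and removing the threshold event simply extends the domain of validity up to the next action of $x_1$.

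There is no substantial obstacle: the content of the proposition is the observation that the hybrid-flow contribution over one period $[0,T]$ acts on $\tanh^{-1}(x_2)$ as a rigid translation whose magnitude $M := AT + B(e^{-rT}-1) + C(e^{-2rT}-1)$ is an invariant of the parameters. The sign of $M$ is precisely what the next subsection will exploit, and recognizing it here motivates the scalar invariant advertised in the abstract.
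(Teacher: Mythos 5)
Your proof is correct and follows essentially the same route as the paper's: apply $\tanh^{-1}$ to the relation, substitute $D(u_0)=\tanh^{-1}(u_0)-B-C$ so the $\tanh^{-1}(u_0)$ terms cancel, observe the resulting condition ($0\;(*)\;AT+B(e^{-rT}-1)+C(e^{-2rT}-1)$, i.e.\ the paper's $B+C\;(*)\;AT+Be^{-rT}+Ce^{-2rT}$) is independent of the initial value, and reverse the steps for $v_0$. Your closing remark correctly anticipates the invariant $M$ that the paper defines immediately after this proposition.
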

Before giving a proof, it is helpful to interpret the result. The statement says that while $x_2$ has not yet acted, $x_2$'s behavior on any given period of $x_1$ -- net increase, decrease, or no change over the course of one nudge effect (ignoring the threshold) -- is entirely independent of $x_2$'s starting position. 

So $x_2$ has not yet acted, if it has experienced a net increase on any given period of $x_1$, it will experience a net increase over \textit{all} periods of $x_1$. We can say the same for a net decrease or no net change over any given period. This idea will be key in the future.

\begin{proof} The supposition written out is that $u_0 <  \tanh(AT+Be^{-rT}+Ce^{-2rT} + D(u_0))$. Since $\tanh^{-1}$ is monotonic, we have that \[
\tanh^{-1}(u_0) <  AT +Be^{-rT}+Ce^{-2rT} + D(u_0)
\]
Recall $D(u_0) = \tanh^{-1}(u_0) - B - C$, so we have that \[
 B+ C<  AT+Be^{-rT}+Ce^{-2rT} 
\]
which is an expression entirely independent of $u_0$. Every step was reversible, so we can work backwards from  \[
 B+ C<  AT+Be^{-rT}+Ce^{-2rT} \Longleftrightarrow \tanh^{-1}(v_0) < AT + Be^{-rT}+ Ce^{-2rT} + \tanh^{-1}(v_0) - B - C
\]
\[
=  AT + Be^{-rT}+ Ce^{-2rT} + D(v_0)
\] 
and the above two lines imply that \[
x_2(0;v_0) < \tanh\lt(AT + Be^{-rT}+ Ce^{-2rT} + D(v_0)\rt) = x_2(T;v_0)
\]
Replacing $<$ by $=$ and $>$ respectively, we get \[
B+ C =   AT+Be^{-rT}+Ce^{-2rT} \quad \text{ and } \quad B+ C > AT+Be^{-rT}+Ce^{-2rT}
\]
\end{proof}
Regardless if $x_2$ acts during any given period, we can still write and compare $B + C$ with $AT+Be^{-rT}+Ce^{-2rT}$ as an invariant. That is, this comparison still tells us about the behavior of $x_2$ on every period. We will use this to rule out the possibility of some undesirable behavior by examining $x_2$ as if it ignored the threshold and deriving a contradiction. 

In the following sections, we will show that $x_2$'s action is possible only when $B+C < AT+Be^{-rT}+Ce^{-2rT}$, and, in fact, the full action state is guaranteed in this case. Since we use similar conditions so often, we're going to write $M := AT+Be^{-rT}+Ce^{-2rT} - B - C$.

Thus, for the hypotheses $(h_1)$-$(h_4)$, our ultimate classification is that $M \leq 0$ is exactly the partial action state, and $M > 0$ is exactly the full action state.

\subsection{Classification of Partial Action}\label{subsec:PartialAction}
Assuming $y_i(0) = 0$ and $x_1(0) = 0, x_2(0) < \tau$, which have been roughly the standard conditions thus far, we can make some general observations about the shape of the curve throughout the period $[kT, (k+1)T]$ for $k \geq 1$. When $k = 0$, $x_1$ has not acted yet and $x_2$ must be decreasing in all cases. 

Suppose $M < 0 $, it is possible that $x_2$ still fails to increase on any interval within the period. Any possible increase must occur at the beginning of the interval as the nudge effect $e^{-rt}$ contribution in \[
[\sigma_A\alpha_2 + \sigma_S(e^{-rt} - \mu_S)] \quad \text{ (recall this controls the sign of } \dot x_2)
\]
is decreasing. By supposition, the end of the period marks a net decrease from the start of the period. This implies that $x_2$ must be decreasing by the end of the period. Then, in the case that $\dot x_2(T) > 0$, there must be a local maximum, which we find in \ref{app:criticalpoint}.

When $M = 0$, $x_2$ must increase before it decreases because $x_2(kT) =  x_2((k+1)T)$, and the nudge effect $e^{-rt}$ contribution in \[
[\sigma_A\alpha_2 + \sigma_S(e^{-rt} - \mu_S)]
\]
is decreasing. However, $x_2$ remains unchanged after each period.

Lastly, when $M>0$, it is possible that $x_2$ never decreases on the period, in which case we may not be able to find a local maximum. If it does decrease, then we can. Note that since this marks a net increase over the period, every point on the interval is larger than the first point. 

We stress that in all cases, there is at most one critical point. There can never be more than one critical point as the derivative of $x_2$ is always decreasing over the period. 

In the following corollaries, we show that given $(h_1)$-$(h_4)$, the partial action state occurs if and only if $M \leq 0$. To do so, we show that that in either the equality or inequality case, if there is a local maximum, then it must be less than $x_2(0)$ and is therefore less than the threshold. 

\begin{cor}\label{Cor: partialAction1}
   Take $(h_1)$-$(h_4)$ as hypotheses. Suppose $M = 0$. Then, the system is in the partial action state. That is, $x_2$ never acts. 
\end{cor}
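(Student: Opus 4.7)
The plan is to prove the corollary by contradiction: assume $x_2$ acts at a first time $t_0$, and bound the maximum of $x_2$ over every relevant period by $x_2(0)<\tau$, which contradicts $x_2(t_0)=\tau$. First, handle the pre-nudge window $[0,T]$: since $y_1\equiv 0$ and $(h_2)$ gives $\sigma_A\alpha_2-\sigma_S\mu_S\le 0$, we have $\dot x_2\le 0$, so $x_2(t)\le x_2(0)<\tau$ on $[0,T]$. Hence $t_0>T$, and $t_0\in(kT,(k+1)T]$ for some $k\ge 1$. On each intermediate period $[jT,(j+1)T]$ with $1\le j<k$, $x_2$ did not act, so Proposition \ref{Prop: invariant} with $M=0$ yields $x_2(jT)=x_2((j+1)T)$; chaining these equalities gives $x_2(kT)=x_2(T)=:u_0$.

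Next I would apply Proposition \ref{Prop:explicitSolution} on $[kT,(k+1)T]$ to write $x_2(kT+s;u_0)=\tanh(f(s)+D(u_0))$ with $f(s)=As+Be^{-rs}+Ce^{-2rs}$ and $D(u_0)=\tanh^{-1}(u_0)-B-C$. The discussion preceding the corollary guarantees that $f$ has a unique interior critical point $s^\ast\in(0,T)$, which is a maximum, and $f(0)=f(T)=B+C$ when $M=0$. Setting $\Delta:=f(s^\ast)-(B+C)>0$, the maximum of $x_2$ on the period equals $\tanh(\Delta+\tanh^{-1}(u_0))$. Proposition \ref{PROP:beforeNudgeSolution} applied on the pre-nudge window gives $\tanh^{-1}(u_0)=AT+\tanh^{-1}(x_2(0))$, so the maximum rewrites as $\tanh\bigl(\Delta+AT+\tanh^{-1}(x_2(0))\bigr)$.

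The heart of the argument is the inequality $\Delta+AT\le 0$: if it holds, then the period maximum is at most $\tanh(\tanh^{-1}(x_2(0)))=x_2(0)<\tau$, giving the contradiction. To establish it I would substitute the $M=0$ identity $AT=(B+C)-Be^{-rT}-Ce^{-2rT}$ and the first-order condition $A=rBe^{-rs^\ast}+2rCe^{-2rs^\ast}$, reducing the inequality to a purely algebraic comparison involving $e^{-rs^\ast},e^{-2rs^\ast},e^{-rT},e^{-2rT}$. Since $s^\ast<T$ the exponentials at $s^\ast$ dominate those at $T$ and, together with $C<0$, drive the sign in the desired direction; the remaining term involves $B$, whose sign depends on $\sigma_A\alpha_2+\sigma_S(\mu_C-\mu_S)$ and may require a brief case split handled via the explicit formula for $s^\ast$ from Appendix \ref{app:criticalpoint} referenced in the preceding discussion. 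This sign analysis is the main obstacle; all other steps are direct invocations of earlier results.
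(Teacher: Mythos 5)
Your setup is sound and tracks the paper's strategy closely: the paper likewise reduces the claim to showing that the unique interior local maximum on each period lies below $x_2(0)$, and your identity $\text{(peak)}=\tanh\bigl(\Delta+AT+\tanh^{-1}(x_2(0))\bigr)$ is exactly the right quantitative form of that statement. The gap is that the one inequality everything hinges on, $\Delta+AT\le 0$, is not proved --- you defer it to a ``sign analysis'' --- and the sketch you give will not close it. After substituting the $M=0$ identity, the inequality becomes
\[
As^{*}+B\bigl(e^{-rs^{*}}-e^{-rT}\bigr)+C\bigl(e^{-2rs^{*}}-e^{-2rT}\bigr)\le 0 .
\]
The first and third terms are indeed nonpositive, but when $B>0$ (equivalently $\sigma_A\alpha_2-\sigma_S\mu_S+\sigma_S\mu_C<0$) the middle term is positive and can dominate. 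Concretely, take $\sigma_A=\sigma_S=\sigma_C=1$, $\mu_S=0$, $r=1$, $\alpha_2=-0.52$, $\mu_C\approx 0.0908$, and choose $\alpha_1$ so that $T=\ln 10$; then $e^{-rs^{*}}=0.52$, $e^{-rT}=0.1$, one checks $M=0$, and the left-hand side evaluates to $\approx +0.019$. So the period maximum exceeds $x_2(0)$, and for $x_2(0)$ close enough to $\tau$ it exceeds $\tau$. The ``brief case split'' you hope for in the $B>0$ regime is therefore not available: the inequality you need is simply false there, so this is not a routine algebraic verification but the point at which the argument breaks.

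For comparison, the paper's own proof obtains the same conclusion from the assertion that $x_1$'s action ``can only increase the derivative of $x_2$,'' i.e.\ $\dot x_2(T+t)>\dot x_2(t)$ for $t\in(0,T)$. In $\tanh^{-1}$ coordinates the difference of these two derivatives is $\sigma_C e^{-rt}\bigl[\sigma_A\alpha_2-\sigma_S\mu_S+\sigma_S\mu_C+\sigma_S e^{-rt}\bigr]$, which becomes negative for $t$ near $T$ precisely when $B>0$: the nudge inflates the perceived-control factor $\sigma_C(\gamma_2+\mu_C)$ and thereby amplifies a still-negative attitude bracket, making $x_2$ fall \emph{faster}. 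So your reduction is faithful to what the paper is trying to establish, and your algebraic route has the virtue of exposing exactly where the difficulty sits; but the decisive step is missing from your proposal, and in the $B>0$ case it cannot be supplied along these lines (nor, as far as I can tell, by the paper's derivative-comparison argument) without additional hypotheses such as $\sigma_A\alpha_2-\sigma_S\mu_S+\sigma_S\mu_C\ge 0$.
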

As mentioned earlier, we'd like to show that $x_2$ has a local maximum below the threshold. We suppose for contradiction that this local maximum is above the threshold. Then, we examine $x_2$ as if this individual ignores the threshold, with the intent of applying Proposition \ref{Prop: invariant} to reach our contradiction. Below is a sketch of how we achieve the contradiction.

$x_2$ was only decreasing on $[0,T]$, and on every future period, $x_2$ is increasing during the  start of $[kT,(k+1)T]$. $x_2$ must return to $x_2(T)$ at the beginning of each period. Putting these facts together, we realize that if $x_2$ were able to cross $x_2(0)$, $|\dot x_2|$ is too small to return to $x_2(T)$ by the beginning of the next period.

\begin{proof} It should be obvious that if $x_2$ doesn't act on $[T,2T]$, then it will never act as the same behavior will be exactly replicated on $[kT, (k+1)T]$ for all $k \geq 1$. That is, there will be no net change over each interval. Suppose for contradiction that there is some point $t'$ on $[T,2T]$ where $x_2(t') \geq \tau$, then $x_2(t') > x_2(0)$ by supposition. For the moment, say that $x_2$ ignores the threshold and examine what $x_2$'s behavior would be -- it must be that $x_2(2T) = x_2(T)$ by Proposition \ref{Prop: invariant}. We show this is not possible.

To see why, note that for $t \in (0,T)$, $\dot x_2(t) < 0$ was always true. We can make the following comparison as the action of $x_1$ at $T$ can only increase the derivative of $x_2$: \[\dot x_2(T + t) > \dot x_2(t) \quad t \in (0,T)\]  Since $x_2(T) = x_2(2T)$, but for any small $\eps > 0$, $x_2'(T+ \eps) > 0$ for $x_2(t') \geq\tau$, then $x_2$ must, at some point, be decreasing on the period $(T,2T)$, so by the intermediate value theorem, there is a point where the derivative is 0. We have found an exact expression for this in  \ref{app:criticalpoint}. Call that point $T + t_{crit}$, and $x_2(T + t_{crit}) \geq x_2(t') \geq \tau > x_2(t_{crit})$. Then, \[
x_2(T + t_{crit})- x_2(2T)  > x_2(t_{crit}) - x_2(T) 
\]
since $x_2(T) = x_2(2T)$. However, $\dot x_2(T + t) > \dot x_2(t)$ for $t \in (0,T)$, so it cannot be true that $x_2(2T) = x_2(T)$. 

Restated: the derivative is \textit{larger} in sign on $(T,2T)$ than on $(0,T)$, and the position of $x_2$ at $T + t_{crit}$ is \textit{larger} than the position of $x_2$ at $t_{crit}$, but the remaining time $2T - t_{crit} - T = T - t_{crit}$ to reach the position $x_2(T)= x_2(2T)$ is equal. 
\end{proof}

This tells us that we can pick any starting value lower than the threshold and land in a unique oscillatory equilibrium. That is, $x_2$ will oscillate between $x_2(T)$ and $x_2(T + t_{crit})$ with exactly the same behavior over each period (Fig \ref{fig:InitialPosition}).

\begin{figure}
    \centering
    \includegraphics[width=\linewidth]{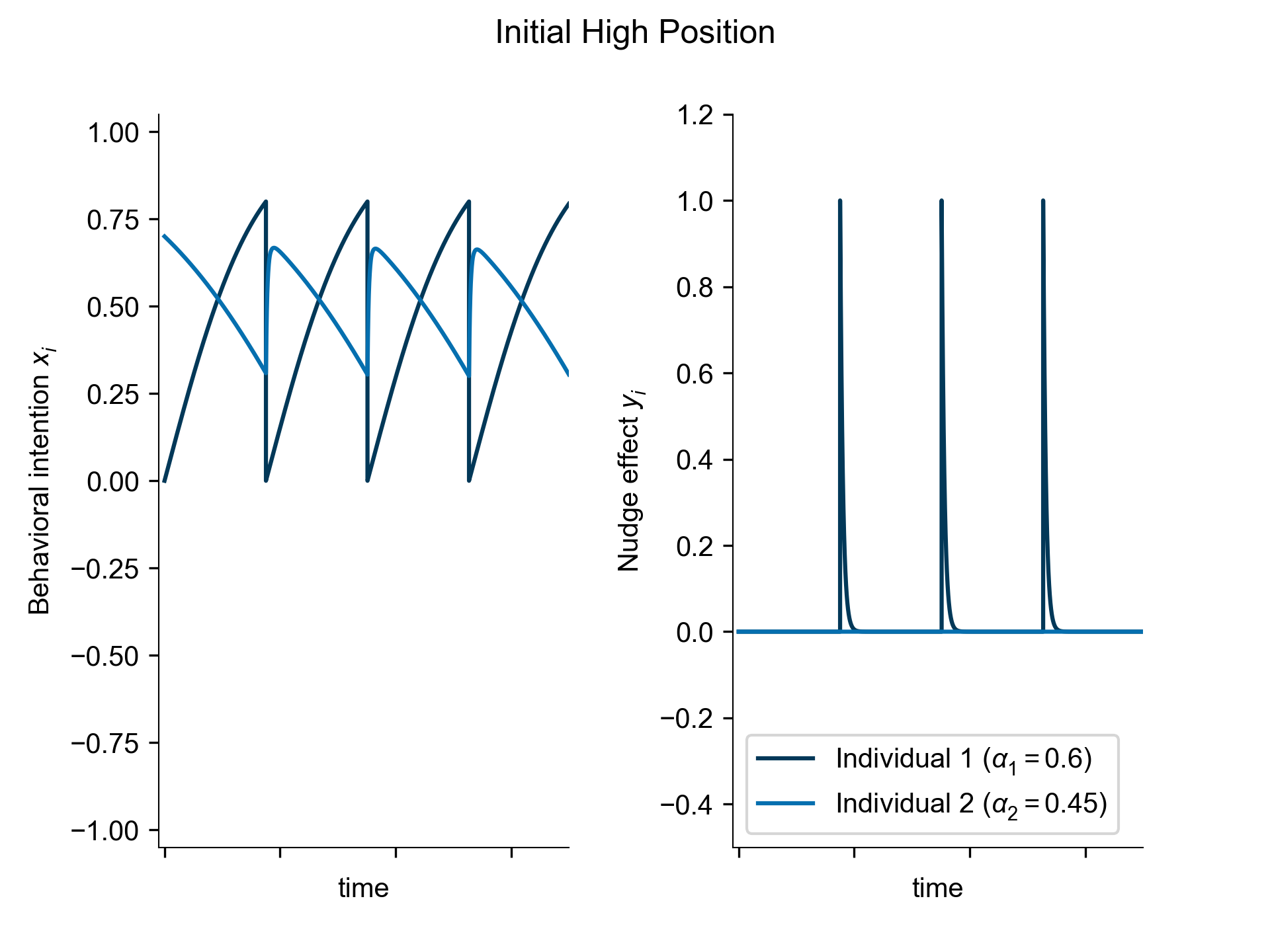}
    \includegraphics[width=\linewidth]{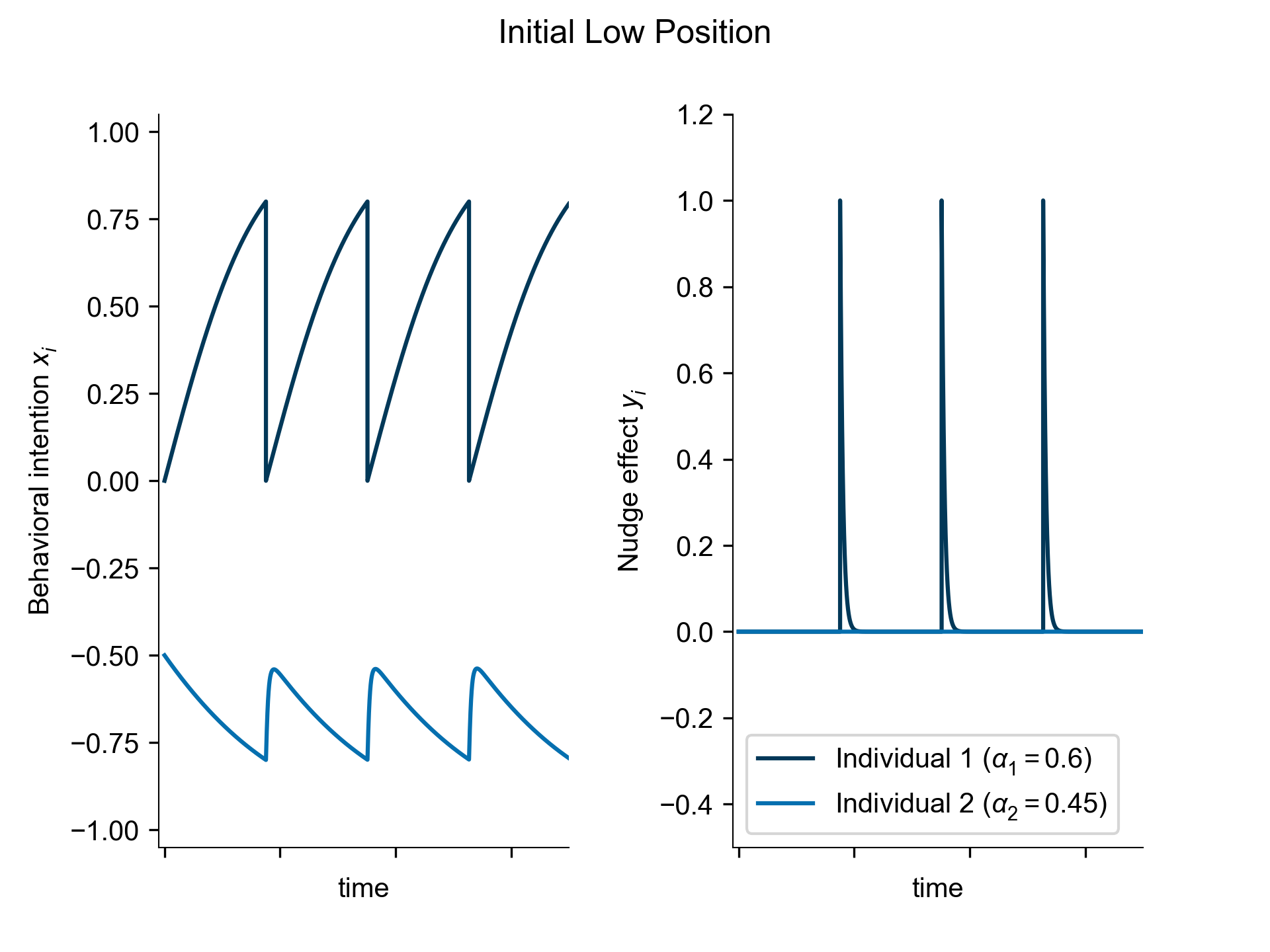}
    \caption{\textbf{Top} an example of the initial data for individual 2's behavioral intention being very high and not being able to reach the behavior threshold. This behavior is qualitatively no different than the case on the \textbf{Bottom} where individual 2's initial behavioral intention is quite low.}
    \label{fig:InitialPosition}
\end{figure}

We prove a quick useful fact before finishing the classification of the partial action case.

\begin{prop}\label{startsHigher}
    Suppose that $\alpha_i = \alpha_j$, and neither $x_i,x_j$ acts on some interval $[t_0,t_1]$. Suppose also that $x_i(t_0) \geq x_j(t_0)$, then $x_i \geq x_j$ on $[t_0,t_1]$
\end{prop}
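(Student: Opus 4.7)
The plan is to rule out a strict crossing of $x_i$ and $x_j$ via a first-crossing argument that leverages Lemma \ref{LEMMA:derivativeDef} together with the equality $\alpha_i = \alpha_j$.

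First I would dispose of the case $x_i(t_0) = x_j(t_0)$. At $t_0$ the two behavioral intentions agree, and since $\alpha_i = \alpha_j$ and (in the matched $n=2$ setting in which this proposition is used) $\gamma_i = \gamma_j$, the entire right-hand side $[\sigma_A\alpha_k + \sigma_S(\gamma_k - \mu_S)]\sigma_C(\gamma_k + \mu_C)(1-x_k^2)$ agrees at $k=i$ and $k=j$. Both trajectories then satisfy the same smooth non-autonomous logistic ODE with the same initial condition, and uniqueness yields $x_i \equiv x_j$ on $[t_0, t_1]$.

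Next, assume $x_i(t_0) > x_j(t_0)$ strictly and suppose for contradiction there exists some $t \in (t_0, t_1]$ with $x_i(t) < x_j(t)$. Set
\[ t^{*} := \inf\{ t \in (t_0, t_1] : x_i(t) \leq x_j(t)\}. \]
By continuity, $x_i(t^{*}) = x_j(t^{*})$ and $x_i \geq x_j$ on $[t_0, t^{*}]$. At $t^{*}$ the coincidence of right-hand sides observed above gives $\dot x_i(t^{*}) = \dot x_j(t^{*})$. But if the two trajectories separate just after $t^{*}$ with $x_j$ above, Lemma \ref{LEMMA:derivativeDef} applied to the pair $(x_j, x_i)$ forces $\dot x_j(t^{*}) > \dot x_i(t^{*})$, contradicting the equality. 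Equivalently, uniqueness of solutions through $(t^{*}, x_i(t^{*}))$ propagates the coincidence forward (and, for a sanity check, backward to $t_0$, contradicting the strict inequality there). Either framing closes the argument.

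The main obstacle, and really the only substantive one, is justifying $\gamma_i = \gamma_j$ on the interval, since $\gamma_k$ excludes $y_k$ itself from the average and therefore generally differs between $i$ and $j$. In the $n=2$ regime in which this proposition is used, the hypothesis that neither individual acts on $[t_0, t_1]$ forces each $y_\ell$ to evolve as $\dot y_\ell = -ry_\ell$ from $y_\ell(t_0)$; combined with the standing initial conditions $y_1(0) = y_2(0) = 0$ and the absence of prior asymmetric actions, this gives $y_i \equiv y_j$ and hence $\gamma_i \equiv \gamma_j$ throughout $[t_0, t_1]$. With this symmetry established, the first-crossing contradiction above is routine, and the inequality propagates to the closed interval by continuity.
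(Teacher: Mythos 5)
Your proposal is correct and takes essentially the same route as the paper: locate a first crossing time via continuity, note that equal $\alpha$'s (together with identical forcing $\gamma$'s) make the two right-hand sides coincide there, and use uniqueness of solutions to propagate the coincidence forward, contradicting any later strict reversal. Your explicit care over $\gamma_i \equiv \gamma_j$ addresses a hypothesis the paper's proof leaves implicit (in its actual application the two ``individuals'' are the same trajectory restarted on successive periods, so the forcing is literally identical), and your aside invoking a converse of Lemma~\ref{LEMMA:derivativeDef} is dispensable since the uniqueness framing you also give is what actually closes the argument.
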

\begin{proof} In the equality case this is obvious, so suppose that the inequality is strict. Suppose for contradiction that there is some point where $x_i(t) <  x_j(t)$, then by the intermediate value theorem, there is some point $t'$ such that $x_i(t') = x_j(t')$. Since $\alpha_i = \alpha_j$, $\dot x_i(t) = \dot x_j(t)$ on $[t',t_1]$, so $x_j(t) = x_i(t)$. \end{proof} 

\begin{cor}\label{Cor:PartialActionFull}
Take $(h_1)$-$(h_3)$ as hypotheses, and suppose $x_2(0) < \tau$. Suppose also that we have $\sigma_A\alpha_1 - \sigma_S \mu_S > 0$ and $\sigma_A\alpha_2 - \sigma_S \mu_S < 0$. The system is classified under partial action (Fig \ref{fig:netDecrease}) if and only if  $M \leq 0$ 
\end{cor}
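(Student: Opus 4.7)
The plan is to prove each direction of the iff separately. For the forward direction ($M \le 0 \Rightarrow x_2$ never acts), I would extend the $M = 0$ argument of Corollary~\ref{Cor: partialAction1} to cover $M \le 0$ in general. On $[0,T]$, $(h_3)$ gives $y_1 \equiv y_2 \equiv 0$ and the strict inequality $\sigma_A\alpha_2 - \sigma_S\mu_S < 0$ gives $\dot x_2 < 0$ throughout, so $x_2(T) < x_2(0) < \tau$. For $k \ge 1$, as long as $x_2$ has not yet acted, Proposition~\ref{Prop: invariant} iterates to $\tanh^{-1}(x_2((k+1)T)) - \tanh^{-1}(x_2(kT)) = M \le 0$, so the sequence $\{x_2(kT)\}$ is non-increasing and bounded above by $x_2(T)$. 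Since every period $[kT,(k+1)T]$ with $k \ge 1$ is governed by the same ODE in elapsed time, standard ODE monotonicity in the initial data places the trajectory on $[kT,(k+1)T]$ pointwise at or below the trajectory on $[T,2T]$, so it suffices to show $x_2$ does not reach $\tau$ on $[T,2T]$. There I would re-run the contradiction of Corollary~\ref{Cor: partialAction1} essentially verbatim: assume a first crossing $x_2(t') = \tau$ with $t' \in (T,2T)$, use Proposition~\ref{Prop: invariant} to deduce $x_2(2T) \le x_2(T)$, and compare the drops on $(t_{crit}, T)$ versus $(T + t_{crit}, 2T)$. For $M < 0$ the extra inequality $x_2(T) > x_2(2T)$ only strengthens the discrepancy, so the derivative-comparison contradiction becomes sharper, not weaker.

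For the reverse direction, I would first establish that $M > 0$ forces at least one action of $x_2$. Suppose for contradiction $x_2$ never acts; then Proposition~\ref{Prop: invariant} applies on every period and telescoping gives $\tanh^{-1}(x_2(kT)) = \tanh^{-1}(x_2(T)) + (k-1)M$. With $M > 0$ the right-hand side exceeds $\tanh^{-1}(\tau)$ for all sufficiently large $k$, forcing $x_2(kT) > \tau$. Since $x_2(0) < \tau$ and $x_2$ is continuous on any interval on which it has not yet acted, the intermediate value theorem produces an earlier time at which $x_2 = \tau$, contradicting the supposition. Hence $x_2$ acts at least once.

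To upgrade a single action into countably many and complete the full-action classification, I would use a monotone-coupling argument, which is where the main obstacle lies. Immediately after $x_2$'s first action at time $t^*$, $x_2(t^*) = 0$, $y_2(t^*) = 1$, and $\gamma_1 = y_2$; using $(h_1)$ one verifies from the product form of $\dot x_1$ that $\partial \dot x_1/\partial y_2 \ge 0$ for $y_2 \in [0,1]$. I would couple the true system to an auxiliary system in which $y_2$ is artificially reset to $0$ from $t^*$ onward, and induct across $x_1$-action events to show that individual $1$ in the true system acts no later than in the auxiliary. Applying the first-action argument above to the auxiliary (which still satisfies $M > 0$ and starts with $x_2 = 0 < \tau$ at $t^*$) gives an eventual action of the auxiliary $x_2$, and by the induced dominance of $y_1$ the true $x_2$ acts no later; iterating produces infinitely many actions. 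The obstacle is making this coupling rigorous: because the $x_1$-reset times differ between the true and auxiliary runs, no smooth-ODE comparison theorem applies directly, and one must construct the coupling piecewise across $x_1$-action events. A short preliminary lemma establishing the sign of $\partial \dot x_2/\partial y_1$ on the relevant range (which is not automatic under $(h_2)$) would also be needed to propagate $y_1$-dominance into $x_2$-dominance.
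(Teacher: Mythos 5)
Your forward direction ($M\le 0\Rightarrow x_2$ never acts) matches the paper's argument almost exactly: the paper likewise reduces to the window $[T,2T]$, assumes a crossing, and derives the same derivative-comparison contradiction used for $M=0$ in Corollary~\ref{Cor: partialAction1}, noting that $x_2(T)>x_2(2T)$ only strengthens it. Your reduction via monotonicity of the explicit solution in $u_0$ is a slightly more explicit justification of the paper's appeal to Proposition~\ref{startsHigher}, but it is the same idea. Note, however, that the paper's own proof of this corollary covers \emph{only} this implication; the converse ($M>0\Rightarrow$ not partial action) is deferred to Subsection~\ref{subsec:Fullaction}, so your proposal is more ambitious in scope.

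On the converse, your first-action argument is a genuinely different and cleaner route than the paper's. Since $\tanh^{-1}(x_2((k+1)T))=\tanh^{-1}(x_2(kT))+M$ exactly (this is what Proposition~\ref{Prop:explicitSolution} gives), telescoping immediately forces a threshold crossing when $M>0$. The paper instead passes to the map $f(x)=\tanh(M+\tanh^{-1}(x))$, locates the point $x_*$ where $f'=1$, and runs a two-stage iteration bound (Lemma~2 and Theorem~\ref{thm:actsAgain}); that machinery buys explicit upper bounds on how many periods are needed, which your telescoping identity also yields with less work ($k\ge 1+(\tanh^{-1}(\tau)-\tanh^{-1}(x_2(T)))/M$), so your route is arguably an improvement. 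The genuine gap is the upgrade from one action to infinitely many, which you candidly flag as unresolved. Your proposed coupling across mismatched reset times is exactly the kind of argument that is hard to close here, and your observation that $\partial\dot x_2/\partial y_1$ need not be nonnegative under $(h_2)$ (the bracket $2\sigma_S y_1+\sigma_S\mu_C+\sigma_A\alpha_2-\sigma_S\mu_S$ can be negative for small $y_1$) is a real obstruction to a naive monotone comparison. The paper's Theorem~\ref{thm:FullAction} avoids a trajectory-level coupling altogether: it uses Corollary~\ref{periodBound} to get that the inter-action times of $x_1$ only shrink ($T'\le T_{var}\le T$, with $T'$ from Corollary~\ref{Cor:countablyInfinite}), then lower-bounds the net increase of $x_2$ over any such interval by a positive constant via a case split on whether the interval contains the critical time $t_{crit}$. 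If you want to complete your proposal, adopting that per-interval lower-bound strategy in place of the coupling is the missing step.
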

\begin{proof} We covered the $=$ case in the last Corollary \ref{Cor: partialAction1}. This proof will be very similar. Suppose we have $M < 0$.

As in the last proof, if $x_2$ doesn't act on $[T,2T]$, it will never act. This is because $x_2$ was only decreasing on $[0,T]$, and there is a net decrease over each following period, so we can apply the preceding proposition. If $\dot x_2(T + \eps) \leq 0$ for every small $\eps > 0$, then we're done as $x_2$ will never be increasing, so suppose for contradiction that $x_2$ acts on $[T,2T]$. As a consequence, $x_2$ must be increasing on $(T, T')$ for some $T <T' < 2T$. 

We show that this contradicts $M < 0$. To see this, examine the behavior of $x_2$ as if it ignores the threshold. 

$x_2(t)$ must be decreasing over some interval $[T', 2T]$, or else we wouldn't have the net decrease prescribed by $M< 0$. Then, the intermediate value theorem tells us that there is a point where $\dot x_2  =0$, and we've derived in \ref{app:criticalpoint} that this point is $T + t_{crit}$. Call that point $T + t_{crit}$, and $x_2(T + t_{crit}) \geq x_2(t') \geq \tau > x_2(t_{crit})$. Then, \[
x_2(T + t_{crit})- x_2(2T)  > x_2(t_{crit}) - x_2(T) 
\]
since $x_2(T) > x_2(2T)$ by supposition. However, $\dot x_2(T + t) > \dot x_2(t)$ for $t \in (0,T)$, so it cannot be true that $x_2(2T) < x_2(T)$, as reasoned similarly in the last corollary. 
\end{proof}

\begin{figure}
    \centering
    \includegraphics[width=0.8\linewidth]{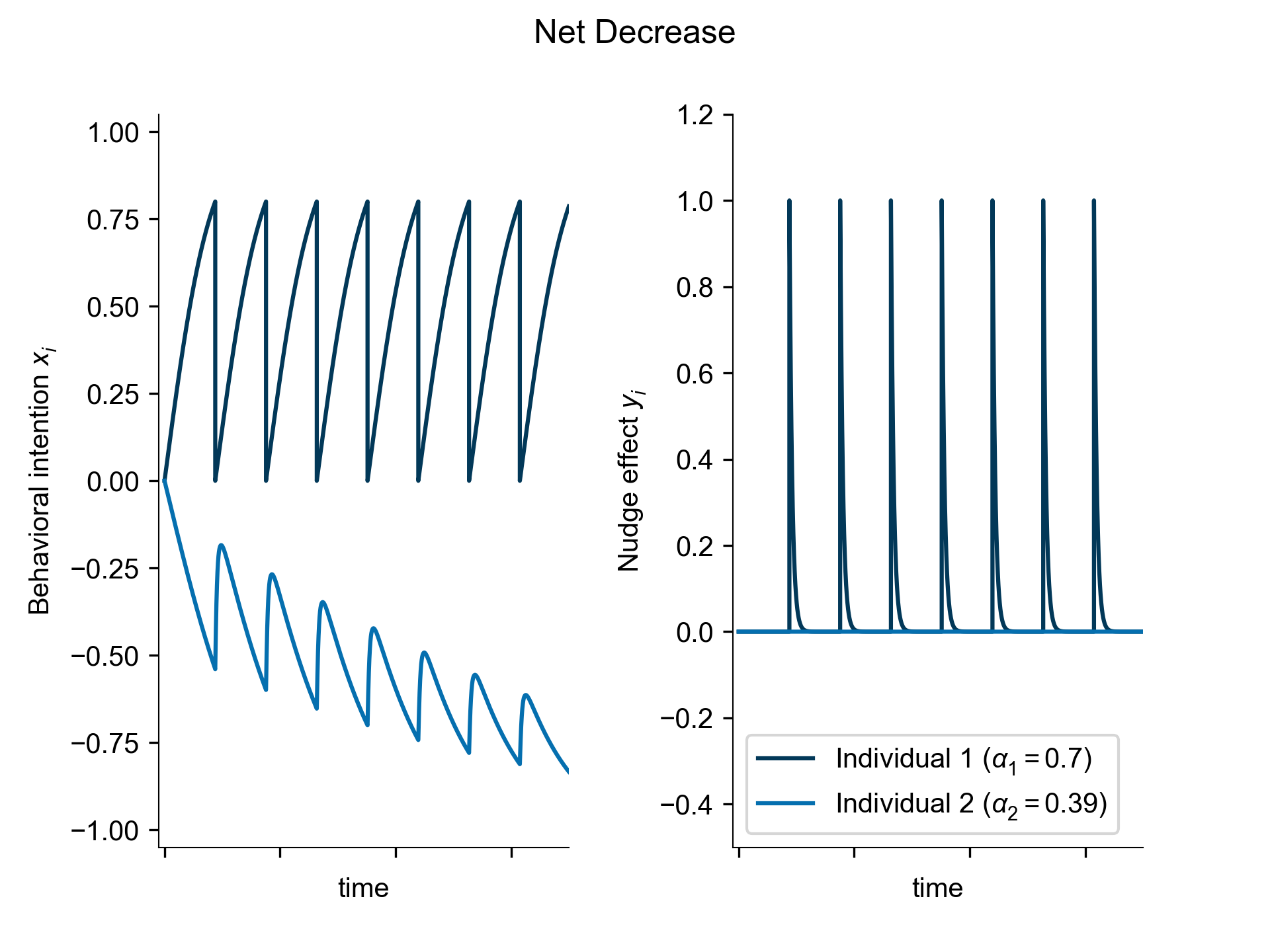}
    \caption{With each action of individual 1, individual 2's behavioral intention initially increases but decreases further than int increased by the next time individual 1 acts. This results in a net decrease for individual 2 and the solution is classified as partial action.}
    \label{fig:netDecrease}
\end{figure}

\subsection{Classification of Full Action}\label{subsec:Fullaction}

Having completely classified the no action and partial action states for the hypotheses $(h_1)$-$(h_4)$, we would like to now show that the remaining case, where we still take $(h_1)$-$(h_4)$, but $M > 0$, gives the full action state. There are two pieces to this. First, we show that $x_2$ must act once, then we show it must continue to act infinitely many times.  From Proposition \ref{Prop:explicitSolution}, we view \[
x(T, u_0) = \tanh(AT + Be^{-rT} + Ce^{-2rT} + D(u_0))\] as a function of $u_0$, where $A, B, C, D$ are the same as in that proposition.

Before beginning the next proof, it is helpful to note a shift in perspective. Rather than viewing our system as evolving in time, we are now viewing it in position. In particular, we rewrite the function above as
\[
f(x) := \tanh(M + \tanh^{-1}(x))
\]
Given some position $x$ to $f$, $f$ returns that position evaluated over one period. For example, we have $f(x_2(kT)) = x_2((k+1)T)$, so long as we ignore the threshold.

\begin{theorem} 
Suppose $(h_1)$-$(h_4)$, $M> 0$, and let $x_* = -\frac{1 - \sqrt{1 - (\tanh M )^2}}{\tanh M}$. Then, $x_* \in (-1,0)$ and \[
\begin{cases}
    f'(x) > 1  & x\in (-1, x_*) \\
    f'(x) < 1 & x \in (x_*,1) \\
    f'(x) = 1 & x = x_*
\end{cases}
\]
\end{theorem}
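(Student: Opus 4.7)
The plan is to reduce the three-way comparison to the sign of a single strictly monotone auxiliary function. The chain rule together with the identity $1 - \tanh^2 u = \text{sech}^2 u$ gives $f'(x) = (1 - f(x)^2)/(1 - x^2)$, so
\[
f'(x) - 1 = \frac{x^2 - f(x)^2}{1 - x^2} = \frac{(x - f(x))(x + f(x))}{1 - x^2}.
\]
Because $M > 0$ shifts the argument of $\tanh$ strictly rightward, $f(x) > x$ on $(-1,1)$, so the factor $x - f(x)$ is strictly negative; the denominator is positive on $(-1,1)$. Hence the sign of $f'(x) - 1$ is precisely the sign of $-(x + f(x))$, and the problem reduces to locating the unique zero and the sign of $h(x) := x + f(x)$.

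Next I would show $h$ has a unique zero, which will be the critical point. Since $f$ is strictly increasing, $h'(x) = 1 + f'(x) > 0$, so $h$ is strictly monotone on $(-1,1)$ and has at most one zero. Solving $h(x) = 0$, i.e.\ $\tanh(M + \tanh^{-1}(x)) = -x = \tanh(-\tanh^{-1}(x))$, and using injectivity of $\tanh$, one gets $\tanh^{-1}(x) = -M/2$, so the unique zero is $x = -\tanh(M/2)$. By strict monotonicity, $h < 0$ on $(-1, -\tanh(M/2))$ and $h > 0$ on $(-\tanh(M/2), 1)$, which immediately gives $f'(x) > 1$, $f'(x) = 1$, $f'(x) < 1$ on the respective subintervals.

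Finally I would verify that $-\tanh(M/2)$ agrees with the closed form $x_*$ in the statement. The hyperbolic half-angle identity $\tanh(M/2) = (1 - \text{sech}\, M)/\tanh M$, combined with $\text{sech}\, M = \sqrt{1 - \tanh^2 M}$ (positive since $M > 0$), yields $-\tanh(M/2) = x_*$. The inclusion $x_* \in (-1,0)$ is then immediate from $\tanh(M/2) \in (0,1)$ for $M > 0$. The only genuine work is the derivative identity and this closed-form match; the three-way classification then follows automatically from strict monotonicity of $h$ and uniqueness of its zero, so I do not expect any serious obstacle beyond bookkeeping.
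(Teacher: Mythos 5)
Your proof is correct, and it takes a genuinely different route from the paper's. The paper first converts $f$ into a M\"obius transformation via the addition formula $\tanh(u+v)=\frac{\tanh u+\tanh v}{1+\tanh u\tanh v}$, then differentiates $g(x)=f(x)-x$ explicitly and locates the sign change of $g'$ by applying the quadratic formula to $2x+\tanh(M)(1+x^2)$, discarding the root outside $(-1,1)$; the inclusion $x_*\in(-1,0)$ is deferred to a separate appendix computation. You instead use the chain-rule identity $f'(x)=\frac{1-f(x)^2}{1-x^2}$ to factor
\[
f'(x)-1=\frac{(x-f(x))(x+f(x))}{1-x^2},
\]
observe that $f(x)>x$ (from $M>0$) kills one factor's sign ambiguity, and reduce the whole classification to the sign of the strictly increasing function $h(x)=x+f(x)$. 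Solving $f(x)=-x$ by injectivity of $\tanh$ gives the critical point in the cleaner form $x_*=-\tanh(M/2)$, which matches the paper's radical expression via the half-angle identity and makes $x_*\in(-1,0)$ immediate. What your approach buys is the avoidance of the quadratic formula and of the appendix argument for the location of $x_*$, plus a more transparent closed form for the fixed-point-free "balance point"; what the paper's approach buys is an explicit rational expression for $f$ and $g$ that it reuses implicitly in the surrounding contraction arguments. Both arguments are complete; yours is arguably the more economical of the two.
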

In words, $f$ is a contraction above $x_*$ and is expanding below $x_*$ 

\begin{proof} We have the identity that \[\tanh(x + y)  = \frac{\tanh x + \tanh y }{1 + \tanh x\tanh y}\]
and applying this to $f$ gives us that
\[
f(x) = \frac{\tanh(M) + x}{ 1 + x\tanh(M)}
\]
We consider the function $g(x) = f(x) -x$ because $g'(x) = f'(x) - 1$, thus we can analyze the sign of $g'(x)$. Writing out $g$ explicitly gives
\[
g(x) = f(x) - x = \frac{x + \tanh(M)}{ 1+ x\tanh(M)} - x = \frac{\tanh(M)(1 - x^2)}{1 + x\tanh(M)}
\]
and its derivative as
\[
g'(x) = -\frac{\tanh(M)(2x + \tanh(M)(1+x^2))}{(1 + x\tanh(M))^2}
\]
Since $\tanh(M) > 0 $ and $(1+x\tanh(M))^2 > 0$, the sign is \[
-\text{sgn}(2x + \tanh(M)(1+x^2))
\]
Looking for where this is 0, we need only apply the quadratic formula,
and one solution is outside the valid values of $x_i$: \[
-\frac{1  + \sqrt{1 - (\tanh M)^2}}{\tanh M} < -1
\]
so we're left with 
\[
x_* = -\frac{1  - \sqrt{1 - (\tanh M)^2}}{\tanh M}
\]
We show that $x_* \in (-1,0)$ in the Appendix. It is a straightforward computation -- for now, take it as a fact. Recall that since $M> 0$, we have $\tanh(M) > 0$. Note that \[\text{sgn}(g'(x)) = -\text{sgn}((2x + \tanh(M)(1+x^2)) = -\text{sgn}(\tanh(M) + 2x + \tanh(M)x^2)\] The argument of the last expression is an upward-facing parabola. We know this is negative between the roots, that is, for $x \in (-1,x_*)$ and positive for $x \in (x_*,1)$. Then, \[
 \begin{cases}-\text{sgn}(2x + \tanh(M)(1+x^2))  > 0 & x \in (-1, x_*)\\
 -\text{sgn}(2x + \tanh(M)(1+x^2))  < 0  & x \in (x_*, 1) \\
-\text{sgn}(2x + \tanh(M)(1+x^2))  = 0 & x = x_*
\end{cases} \]implies that for each case above we have correspondingly \[ \begin{cases}
  g'(x) > 0 & x \in (-1, x_*)   \\
g'(x)  < 0  & x \in (x_*, 1)\\
g'(x) = 0 & x = x_*
\end{cases}\]
Rewriting $g'(x) = f'(x) - 1$, we obtain the desired result\[
\begin{cases}
  f'(x) > 1 & x \in (-1, x_*)   \\
f'(x)  < 1  & x \in (x_*, 1)\\
f'(x) = 1 & x = x_*
\end{cases}\]
\end{proof}
The next two results establish that $x_2$ must act. The first gives a loose bound on how long it takes $x_2$ to cross $x_*$, and the second gives another loose bound on how long it takes $x_2$ to cross the threshold from $x_*$. 

Note that $x_1$ first acts at $T$, and by $(h_2)$, $x_2$ is certainly decreasing until $T$. We want to show that there is a period during which either $x_2 > x_*$ throughout the entire period, or $x_2$ acts on that same period where it crosses $x_*$. In that first case, it suffices to show that $x_2$ is larger only at the start of some period. 
\begin{lemma}
   Given $(h_1)$-$(h_4)$, $M>0$, there exists time $mT$ for $m \geq 1$ s.t. either $x_2(mT) > x_*$ or $x_2(t_1) = x_*$ for $t_1 \in [mT, (m+1)T]$ and $x_2(t_2) = \tau$ for $t_1 \in (mT, (m+1)T]$
\end{lemma}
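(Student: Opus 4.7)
The plan is to exploit the fact that when $M > 0$, the map $f(x) = \tanh(M + \tanh^{-1}(x))$ satisfies $f(x) > x$ strictly for every $x \in (-1, 1)$. This is immediate from the computation carried out in the proof of Theorem 1, where $g(x) := f(x) - x$ was shown to equal $\tanh(M)(1-x^2)/(1 + x\tanh(M))$; the numerator is positive for $|x|<1$ and the denominator is positive because $x > -1$ forces $x\tanh(M) > -\tanh(M) > -1$. Combined with Proposition \ref{Prop:explicitSolution}, this tells us that as long as $x_2$ has not acted by time $kT$ (with $k\geq 1$), we have $x_2((k+1)T) = f(x_2(kT)) > x_2(kT)$, so the sequence $\{x_2(kT)\}_{k\geq 1}$ is strictly increasing for as long as it is defined without action.

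From here I would proceed by a case split. Case A: suppose $x_2$ acts during some period $[mT, (m+1)T]$ with $m\geq 1$, and let $m$ be the smallest such index. If $x_2(mT) > x_*$ the first branch of the conclusion holds, so assume $x_2(mT) \le x_*$. Let $t_2 \in (mT,(m+1)T]$ be the first action time in this period; on $[mT, t_2]$ the trajectory $x_2$ is continuous and governed by the explicit formula from Proposition \ref{Prop:explicitSolution}. Since $x_2(t_2) = \tau > 0 > x_* \ge x_2(mT)$, the intermediate value theorem yields a time $t_1 \in (mT, t_2) \subseteq [mT,(m+1)T]$ with $x_2(t_1) = x_*$, giving the second branch.

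Case B: suppose $x_2$ never acts. Then Proposition \ref{Prop:explicitSolution} applies on every period and $x_2((k+1)T) = f(x_2(kT))$ holds for all $k \ge 1$. The sequence $\{x_2(kT)\}$ is strictly increasing and bounded above by $1$. If it were also bounded above by $x_*$, monotone convergence would give $x_2(kT) \to L$ for some $L \in (-1, x_*]$, and continuity of $f$ would force $f(L) = L$, contradicting $f(L) > L$ on $(-1,1)$. Hence there is some $m \ge 1$ with $x_2(mT) > x_*$, i.e.\ the first branch holds.

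The main obstacle is the careful bookkeeping tying the discrete iterate to the continuous dynamics: one must verify that Proposition \ref{Prop:explicitSolution} remains in force across every period preceding the index $m$ (so that hypothesis $(h_4)$ is not violated before the formula is invoked), and in Case A argue that the intermediate value step really does place $t_1$ strictly before the action at $t_2$, while $x_2$ is still described by the explicit solution. Once these details are handled, the monotone convergence step in Case B follows immediately from Theorem 1's formula for $g$.
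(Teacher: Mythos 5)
Your proof is correct, but the key step is genuinely different from the paper's. Where you dispose of the case ``$x_2$ never acts and stays below $x_*$'' by a soft argument --- the sequence $x_2(kT)$ is strictly increasing, so if bounded above by $x_*$ it would converge to a limit $L$ with $f(L)=L$, contradicting $f(x)>x$ on $(-1,1)$ --- the paper instead uses the sign analysis of $g'=f'-1$ from Theorem 1: since $g$ is monotone increasing on $(-1,x_*)$, every increment $f^{k}(x_2(T))-f^{k-1}(x_2(T))$ below $x_*$ is at least the first increment $f(x_2(T))-x_2(T)>0$, which yields the explicit bound $m=\lceil |x_*-x_2(T)|/(f(x_2(T))-x_2(T))\rceil$. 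The trade-off: the paper's argument is quantitative (it produces the concrete $m$ that the surrounding discussion critiques for tightness and that Theorem \ref{thm:actsAgain} refers back to), whereas yours needs only $f(x)>x$ and continuity of $f$, not the location of $x_*$ or the convexity analysis, and so is shorter and arguably cleaner; it is also more explicit than the paper about the branch in which $x_2$ acts while crossing $x_*$, which you handle cleanly by the intermediate value theorem on $[mT,t_2]$ using $x_2(mT)\le x_* < 0 < \tau$. Both arguments rely on the same bookkeeping point you flag --- that Proposition \ref{Prop:explicitSolution} and the identity $x_2((k+1)T)=f(x_2(kT))$ remain valid on every period preceding the first action --- and your minimality-of-$m$ device in Case A settles that correctly.
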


\begin{proof} Note that by $(h_2)$ and $(h_4)$, $\tau > x_2(0) > x_2(T)$, which is why $m \geq 1$. If $x_2(T) > x_*$, then we're done. We know that $f(x_*) > x_*$ since $f$ is monotonic increasing, so if we ever have that $f^n(x_2(T)) = x_*$, then either $x_2((n+1)T) > x_*$ or $x_2$ acted in $(nT, (n+1)T)$

So take the remaining case where $x_2(T) < x_*$. Suppose for the rest of this proof that $x_2$ does not act and that $f(x_2(T)) < x_*$ -- otherwise, we'd be done. We will find such an $m$ as described in the statement.

Write $g(x) := f(x) - x$ as earlier, and note that for $x \in (-1,x_*)$, we have that $g'(x) > 0$, so it is monotonic increasing. Then, for all $k \geq 2$ satisfying $f^k(x_2(T)) < x_*$, since $f$ is also monotonic increasing, we have
     \[ g(f^{k-1}(x_2(T))) > g(x_2(T)) \implies f^k(x_2(T)) - f^{k-1}(x_2(T)) > f(x_2(T)) - x_2(T) \]
Then, we can upper bound the possible integer $k$ satisfying $f^k(x_2(T)) < x_*$ as follows: \[
\frac{|x_* - x_2(T)| }{f(x_2(T)) - x_2(T)} \geq k  
\]
So we can pick \[
m = \lt \lceil \frac{|x_* - x_2(T)| }{f(x_2(T)) - x_2(T)}\rt \rceil
\]
as desired.

\end{proof}
This bound, $mT$, on the time is not tight. The proof used that the distance between successive applications of $f$ was no smaller than the distance $f(x(T)) - x_2(T)$, but there are many cases where this is smaller, that is, \[
f^k(x_2(T)) - f^{k-1}(x_2(T)) \gg f(x_2(T)) - x_2(T)
\]
Qualitatively, the bound is worse when $k$ is larger. If $k$ is especially small, then the difference in growth might only mean that we overestimate by a few applications of $f$. 

\begin{theorem}\label{thm:actsAgain}
    Given $(h_1)$-$(h_4)$, $M > 0$, take $m$ as in Lemma 2. Then, there must be some $n \geq 0$ where $x_2((m+n)T) = \tau$. 
\end{theorem}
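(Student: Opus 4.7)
The plan is to derive a contradiction by assuming $x_2$ never acts after time $mT$ and showing that iterating the period-$T$ map must eventually push $x_2$ above $\tau$. First I would dispose of the easy case from Lemma~2 where $x_2$ already acts in $(mT,(m+1)T]$: just take $n=0$. Otherwise I may assume $x_2(mT) > x_*$ and that $x_2$ never reaches $\tau$ for any $t > mT$. Under that assumption $x_2$ is continuous across every subsequent period boundary, so iterating Proposition~\ref{Prop:explicitSolution} gives the clean identity $x_2((m+k)T) = f^k(x_2(mT))$ for every $k \geq 0$.

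The main structural fact I would exploit is that $M > 0$ forces $f(x) > x$ strictly on $(-1,1)$. This is immediate from the formula
\[
g(x) \;=\; f(x) - x \;=\; \frac{\tanh(M)\,(1-x^2)}{1 + x\tanh(M)}
\]
derived inside the proof of Theorem~1: with $M>0$, both factors in the numerator are strictly positive and the denominator is positive on $(-1,1)$ since $|\tanh(M)| < 1$. Consequently the orbit $\{f^k(x_2(mT))\}_{k \geq 0}$ is strictly increasing and bounded above by $1$, hence converges to some $L \leq 1$. Continuity of $f$ gives $f(L) = L$, but $f(x) = x$ in $(-1,1)$ would require $M = 0$, so the only possible limit is $L = 1$.

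Since $L = 1 > \tau$, there is a least integer $n \geq 0$ with $f^n(x_2(mT)) \geq \tau$. If $n=0$ we are done. Otherwise $f^{n-1}(x_2(mT)) < \tau \leq f^n(x_2(mT))$, so $x_2((m+n-1)T) < \tau$ while the would-be value of $x_2$ at $(m+n)T$ is at least $\tau$. Applying the intermediate value theorem to $x_2$ on $[(m+n-1)T,(m+n)T]$, on which $x_2$ is continuous by our standing assumption of no action, I find $t^\star$ in that interval with $x_2(t^\star) = \tau$, contradicting the hypothesis that $x_2$ never acts. Hence $x_2$ must act within some period indexed by $m+n$, as claimed.

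The only real technical hazard I anticipate is bookkeeping in the proof-by-contradiction: the identity $x_2((m+k)T) = f^k(x_2(mT))$ is valid only while $x_2$ has not yet acted, so one has to be explicit that the contradiction is extracted precisely at the first $k=n$ for which the iterate would cross the threshold. Everything else reduces to the monotone-bounded-sequence argument plus the absence of fixed points of $f$ in $(-1,1)$, both of which follow directly from $M > 0$ and Theorem~1.
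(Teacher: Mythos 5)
Your proof is correct, but it takes a genuinely different route from the paper's main argument. The paper exploits Theorem~1 quantitatively: since $g(x)=f(x)-x$ is decreasing on $(x_*,1)$ and the iterates are assumed to stay below $\tau$, every increment satisfies $f^{k+1}(x_2(mT))-f^k(x_2(mT)) > f(\tau)-\tau>0$, which forces the orbit past $\tau$ within an explicitly computable number of periods, namely $n=\lceil (\tau-x_2(mT))/(f(\tau)-\tau)\rceil$. You instead run a soft argument: $M>0$ gives $f(x)>x$ on all of $(-1,1)$, so the orbit is strictly increasing and bounded, its limit $L$ would be an interior fixed point if $L<1$, hence $L=1>\tau$ and the orbit must cross $\tau$. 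This is essentially the paper's own alternate proof in the Appendix (Banach fixed point theorem on the extension $\wt f$), carried out more elementarily via monotone convergence. What your approach buys is simplicity — notably, you never actually need $x_2(mT)>x_*$, so Lemma~2 and the point $x_*$ become superfluous for bare existence — but what it loses is exactly what the paper's main proof is designed to deliver: an explicit upper bound on the number of periods before $x_2$ acts, which the authors flag as the reason they relegate the fixed-point argument to the Appendix. One small bookkeeping point, which afflicts the paper's version equally: both arguments produce a threshold crossing at some $t^\star\in((m+n-1)T,(m+n)T]$ rather than exactly at a period boundary, so the literal equality $x_2((m+n)T)=\tau$ in the statement is satisfied only up to the loose reading the authors themselves adopt in the remark following the theorem.
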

\begin{proof}
   Using the last result, we can guarantee that either $x_2$ acts, or there is time $mT$ where $x_2(mT) > x_*$. Suppose for contradiction that $x_2$ does not act. 
   
   Then, there must be some value $\tau^* \leq \tau$ such that for all $k \geq 0$, we have that $f^k(x_2(mT)) < \tau^*$. But we have that $g(x):= f(x) - x$ is monotonic decreasing on $(x_*,1)$ since $g'(x) < 0$ on this interval by Theorem 1. Then for all $k \geq 2$, we have that $g(\tau) < g(f^k(x_2(mT))$, which implies \[
   f(\tau) - \tau  < f^{k+1}(x_2(mT)) - f^k(x_2(mT))
   \]
   This means that that each iteration of $f$ increases the position by no less than it does at $\tau^*$, thus we can achieve a contradiction noting that if we take any $k$ such that \[
   k \geq \frac{\tau - x_2(mT)}{f(\tau) - \tau}
   \]
   Then, choose $n$ to be \[
   n = \lt \lceil \frac{\tau - x_2(mT)}{f(\tau) - \tau} \rt \rceil
   \]
   If one wishes for a more concrete (but worse) bound, it is easy to see we can instead take \[
   n = \lt \lceil \frac{\tau - x_*}{f(\tau) - \tau} \rt \rceil
   \] 
\end{proof}
Note that the lower bound we've obtained on $t$ satisfying $x_2(t) \geq \tau$ is not tight -- it requires $x_2((m+n)T) > \tau$, but $x_2$ could have very well achieved $x_2(\ell T + t_{crit}) > \tau$ for $\ell \ll m+n$. That is, $x_2 \geq \tau$ could have occurred during an interval, rather than having to \textit{end} a particular interval above $\tau$. In fact, if $x_2$ was non-increasing at the end of each interval, then it is guaranteed that it would have crossed the threshold during $(\ell T, \ell T + t_{crit}]$. Furthermore, the bound suffers from the same issues as discussed after the last result. 

There is an alternate proof of this result, which we have put in the Appendix. That method first continuously extends $f$ to take values in $[-1,1]$. Then, we make use of the form of the Banach fixed point theorem that requires only compactness and $f'(x) < 1$ rather than a fixed coefficient $c <1$ satisfying $|f(x) - f(y)| < c|x-y|$. This proof emphasizes more of the nature of the map $f$, and we find it provides useful intuition about $f$. However, there is no bound at all on how long it takes $x_2$ to act. 

So far, we've only shown that $x_2$ will act once given the conditions $(h_1)-(h_4)$. To show the full action state, we must relax $(h_4)$. That is, we show that $x_2$ will act, given that it has already acted before. The main idea behind this final theorem in the classification is that $x_2$'s action(s) cause(s) a nudge effect on $x_1$, but $x_1$ can never act slower. Thus, the ``varying period'' in which $x_2$ receives nudges from $x_1$ is never longer than it was before $x_2$ had acted originally. That is to say, $x_2$ receives nudges from $x_1$ \textit{at least as} frequently as it did before $x_2$ acted. Then, regardless of wherever $x_2$ is evaluated on this varying period, it will still be experiencing some lower-bounded net increase. 

\begin{theorem}\label{thm:FullAction}
    Given $(h_1)$-$(h_3)$, $M>0$, $x_2$ will act infinitely many times, thus establishing the full action state (Fig\ref{fig:netIncrease}). 
\end{theorem}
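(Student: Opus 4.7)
The plan is to extend Theorem~\ref{thm:actsAgain} past $x_2$'s first action by iterating it. I would sample the system at the instants $t_1<t_2<\cdots$ when $x_1$ acts and show that, once $(h_4)$ is dropped, a uniform positive lower bound on the per-cycle increment $\tanh^{-1}(x_2(t_{k+1}))-\tanh^{-1}(x_2(t_k))$ persists. Four pieces go into this: two-sided control on the inter-$x_1$-action times, the observation that Proposition~\ref{Prop:explicitSolution} and the invariant $M$ continue to hold cycle-by-cycle, a uniform positivity statement for $M$, and a divergence argument that forces the next action.

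Setting $T_k:=t_{k+1}-t_k$, I would first establish $T_k\in[T_{\min},T]$. Since $x_1(t_k^+)=0$ and $\gamma_1=y_2\ge 0$, Corollary~\ref{periodBound} gives $T_k\le T$; the uniform upper bound on $\dot x_1$ used inside Corollary~\ref{Cor:countablyInfinite} gives $T_k\ge T_{\min}$ for an explicit $T_{\min}>0$. The derivation of Proposition~\ref{Prop:explicitSolution} only uses $y_1(t_k^+)=1$ and the fact (specific to $n=2$) that $y_2$ does not enter $\dot x_2$, both of which remain true after $x_2$ has acted. Hence, on any interval $[t_k,t_{k+1}]$ in which $x_2$ does not act, with $u_k:=x_2(t_k)$,
\[
\tanh^{-1}\!\bigl(x_2(t_{k+1})\bigr)-\tanh^{-1}(u_k)=AT_k+Be^{-rT_k}+Ce^{-2rT_k}-B-C=:M(T_k),
\]
the invariant of Section~\ref{subsec:Invariant} evaluated at the variable period.

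The key step is to show $M(T')>0$ for every $T'\in(0,T]$, whence by continuity and compactness $M^*:=\min_{T'\in[T_{\min},T]}M(T')>0$. The identity $M'(T)=A-rBe^{-rT}-2rCe^{-2rT}$ exhibits $M'$ as an upward-opening convex quadratic $p(z)=-2rCz^2-rBz+A$ at $z=e^{-rT}$, with $p(0)=A\le 0$ by $(h_2)$. If $p(1)=M'(0)\le 0$, the convex-combination inequality forces $p\le 0$ on $[0,1]$, giving $M'\le 0$ and hence $M\le 0$ on $[0,\infty)$, contradicting $M(T)>0$. So $p(1)>0$, the quadratic has a single root in $(0,1)$, and $M$ is unimodal with $M(0)=0$, strictly increasing then strictly decreasing. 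Whichever side of the maximum $T$ lies on, $M(T)>0$ plus unimodality force $M>0$ throughout $(0,T]$.

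With $M^*$ in hand, suppose for contradiction that $x_2$ acts at some $t_a$ but never thereafter, and let $k_0$ be the smallest $x_1$-action index past $t_a$. Summing the per-cycle identity,
\[
\tanh^{-1}\!\bigl(x_2(t_{k_0+n})\bigr)\ge \tanh^{-1}\!\bigl(x_2(t_{k_0})\bigr)+nM^*,
\]
which diverges as $n\to\infty$, contradicting $x_2<\tau$. Hence $x_2$ must act again within at most $\bigl\lceil(\tanh^{-1}(\tau)-\tanh^{-1}(x_2(t_{k_0})))/M^*\bigr\rceil$ $x_1$-cycles, and induction over successive $t_a$ yields infinitely many actions. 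I expect the main obstacle to be the positivity step: $M(T')$ is not in general monotonic in $T'$, so one cannot transfer $M(T)>0$ to smaller periods by monotonicity alone. The convex-quadratic structure of $M'$ in $z=e^{-rT}$, combined with $(h_2)$ pinning $p(0)\le 0$, is what forces unimodality and lets the positivity propagate to all $T'\in(0,T]$, with some extra care needed in the degenerate cases $A=0$ or $\sigma_S\sigma_C=0$.
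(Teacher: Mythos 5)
Your proof is correct, and at the strategic level it matches the paper's: both arguments confine the variable inter-action time of $x_1$ to a compact interval $[T_{\min},T]$ (upper bound from Corollary~\ref{periodBound}, lower bound from the derivative estimate in Corollary~\ref{Cor:countablyInfinite}), establish a uniform positive lower bound on $x_2$'s net gain per $x_1$-cycle, and finish by counting cycles. The execution differs in two ways that make your version tighter. First, by passing to the coordinate $\tanh^{-1}(x_2)$ you turn the per-cycle gain into the exact, position-independent quantity $M(T_k)$, so uniformity over starting positions is automatic; the paper instead must argue separately (re-running the reversibility trick of Proposition~\ref{Prop: invariant}) that its comparison of net increases does not depend on the starting value, and works with a worst-case starting position $z$. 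Second, your positivity step --- $M(T')>0$ for every $T'\in(0,T]$ --- is exactly the content of the paper's two-case split around $t_{crit}$: your quadratic factors as $p(z)=[\sigma_A\alpha_2+\sigma_S(z-\mu_S)]\sigma_C(z+\mu_C)$, whose root in $[0,1)$ is $e^{-rt_{crit}}$ from~\ref{app:criticalpoint}, so your critical period is the paper's critical time. But you derive unimodality of $M$ cleanly from convexity of $p$ together with $p(0)=A\le 0$ forced by $(h_2)$, and you explicitly verify that both endpoint gains are positive, whereas the paper's Case 2 takes $\min\bigl(x_2(z+T')-z,\;x_2(z+T)-z\bigr)$ as the per-cycle gain and leaves the positivity of the first term implicit (it does follow, by the same unimodality, whichever side of $t_{crit}$ the lower bound $T'$ falls on). The degenerate cases you flag ($\sigma_S\sigma_C=0$, $r=0$, $A=0$) either contradict $M>0$ or are excluded by the model, so nothing essential is missing; your argument is a valid and somewhat cleaner rendering of the paper's proof.
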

\begin{proof} Suppose that $x_2$ has just acted at time $t^*$. We show it must act again. Since $M > 0$, we know that if $x_2$ receives a nudge from $x_1$ at some time $t'$, then $x_2(t') < x_2(t)$ for any $t$ in $(t', t' +T )$. Since $x_2$ acted, the time between $x_1$'s actions are all less than or equal to $T$ by Corollary \ref{periodBound}. Call this varying time $T_{var}$. From Corollary \ref{Cor:countablyInfinite}, we have a lower bound on $T_{var}$, which we'll name $T'$. We now can split into cases. 

\textbf{Case 1:} Suppose that before its first action, $\dot x_2(t) \geq 0$ for all $t$ in each period. Then, $T \leq t_{crit}$. We can conclude $T_{var} \leq T \leq t_{crit}$, so we still have $\dot x_2(t) \geq 0$ between actions of $x_1$. Because of this, we have that \[x_2(t' + T') - x_2(t') \leq x_2(t' +t) - x_2(t')\] for all $t \in [t' + T', t' + t_{crit}]$. In words, $\dot x_2 >0$ is always true between $x_1$'s actions, so the smallest net increase occurs when the least time has elapsed.

Then, let $z = \max(|x_2(T)|, \tau)$, and every net increase experienced by $x_2$ will be at least as large as $x_2(z + T') - z$ as argued similarly in Lemma 2. So, we can upper bound the number of actions of $x_1$ required for $x_2$ to act again by \[
    \lt \lceil \frac{\tau - x_2(T)}{x_2(z + T') - z} \rt \rceil
    \]
In summary, $x_2$ must act again. 

\textbf{Case 2:} Suppose that before its first action, $x_2$ crossed the critical time $t_{crit}$ in each period. Then, $T > t_{crit}$. As such, it is possible that we have $T_{var}$ both larger and smaller than $t_{crit}$ depending on the particular interval. However, we can use a similar strategy as in the previous case. Taking $T'$ and $t'$ to be as before, it is possible that \[x_2(T + t') - x_2(t') < x_2(T' + t') - x_2(t')\]
In words, it is possible that, since $x_2$ is decreasing at some point during each interval, it would experience a smaller net increase after $T$ time rather than after $T'$ time. One of these two times must result in the smallest possible net increase by any $T_{var}$. To see why, consider the following. Before the critical time, $x_2$ is always increasing, so the smallest increase occurs when it has had the least time \textit{to} increase, which we obtain using $T'$. After the critical time, $x_2$ is always decreasing, so the smallest increase occurs when it has had the \textit{most} time to decrease, which obtain using $T$. 

Using the same technique as in Proposition \ref{Prop: invariant}, we have that if $x_2(T + t') - x_2(t') < x_2(T' + t') - x_2(t')$ for some initial position $x_2(t')$, then it is true for every initial position. As a sketch, the above expression clearly shows that $x_2(T + t') < x_2(T' + t')$, and by taking $\tanh^{-1}$ of both sides, we obtain an inequality that is independent of initial position. We could follow the same process if $x_2(T + t') - x_2(t') \geq x_2(T' + t') - x_2(t')$. In either case, we've shown that if $T$ results in a smaller increase than $T'$ for some initial position, then $T$ always results in a smaller increase regardless of initial position (and vice-versa). 
    
Now, let $z = \max(|x_2(T)|, \tau)$ as before, and by the same reasoning as in the previous case, every net increase experienced by $x_2$ will be at least as large as $\min(x_2(z + T') - z, \; x_2(z + T) -z)$. So, we can upper bound the number of actions of $x_1$ required for $x_2$ to act again by \[
    \lt \lceil \frac{\tau - x_2(T)}{\min(x_2(z + T') - z, \; x_2(z + T) -z)} \rt \rceil\] In summary, $x_2$ must act again. 

\end{proof}

\begin{figure}
    \centering
    \includegraphics[width=\linewidth]{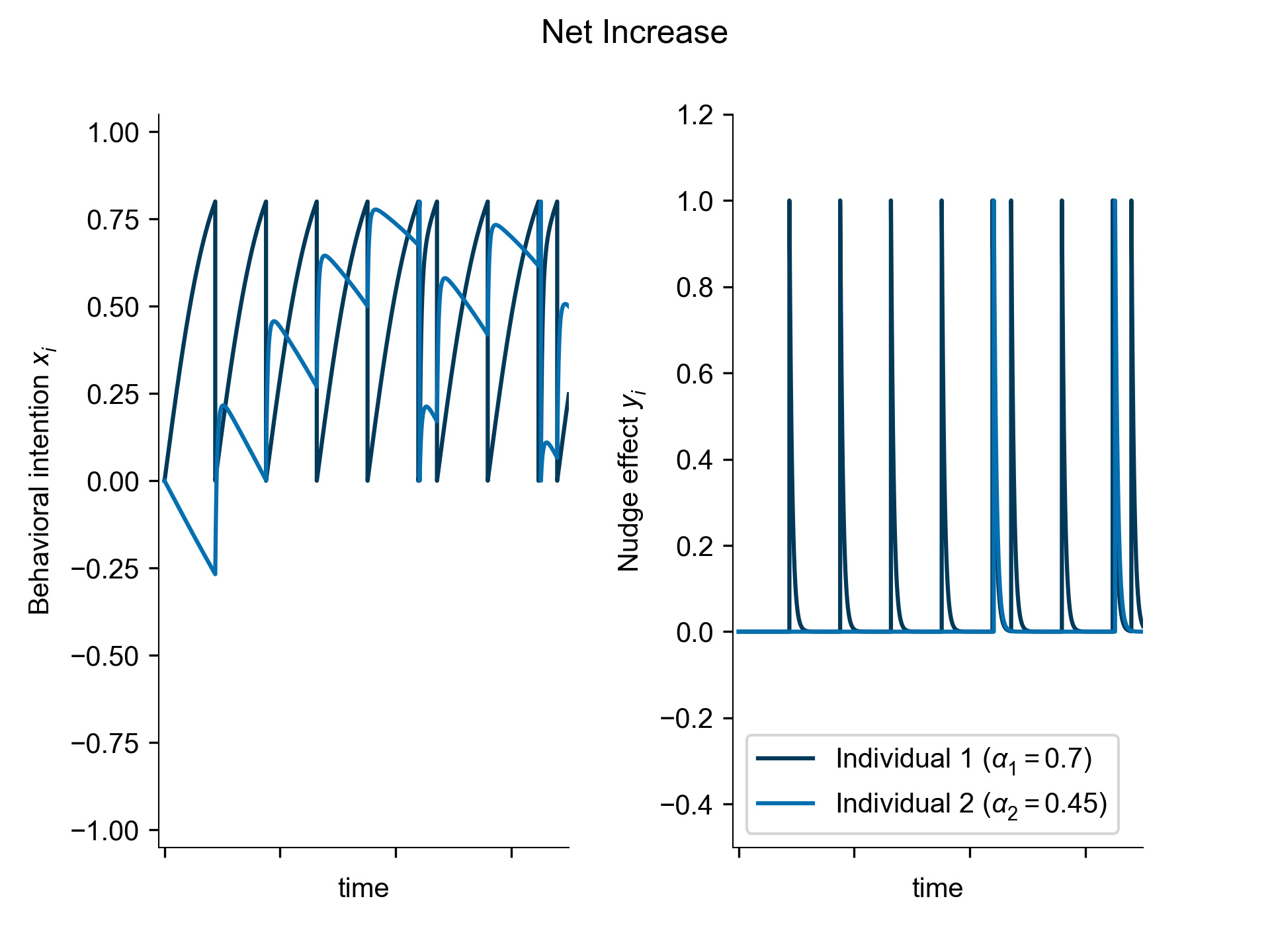}
    \caption{With each action of individual 1, individual 2's behavioral intention initially increases and, although it decreases, it does not decrease far enough by the next time individual 1 acts. This results in a net increase for individual 2's behavioral intention and thus individual 2 will act. Knowing that individual 2 acted once is sufficient for individual 2 to continue to act indefinitely.}
    \label{fig:netIncrease}
\end{figure}

We are able to now summarize our results on the no action, partial action, and full action states into Table 2, in which without loss of generality, we still assume $\alpha_1 \geq \alpha_2$.
\begin{table}[h]
    \begin{center}
    \begin{tabular}{|c|c|c|c|}
    \hline
       $\text{sgn}(\sigma_A\alpha_1 - \sigma_S\mu_S)$  & $\text{sgn}(\sigma_A\alpha_2 - \sigma_S\mu_S)$ & $\text{sgn}(M)$ & Classification \\   \hline
       Negative & (Forced Negative) & &  No Action \\   \hline
       0  & (Forced Non-Positive) &  &  No Action\\   \hline
       Positive  & Positive & &  Full Action \\   \hline
        Positive & 0 &  & Full Action \\   \hline
        Positive & Negative & Positive &  Full Action \\ \hline
        Positive & Negative & Non-Positive &  Partial Action \\ \hline
    \end{tabular}
    \caption{Summary of Classification}
    \label{tab:Classification}
    \end{center}
\end{table}

\section{Remarks on the Difficulty of the System}
The majority of this paper was devoted to the case where $\dot x_1 > 0$ and $\dot x_2 \leq 0$ and $x_2$ had not yet acted. In doing so, we obtained a classification of the action of $x_2$ based on the sign of $M$. Now, we ask a natural follow-up question under those same conditions. To see why this is indeed natural, we'll consider a few examples. 

When $\dot x_2(0) = 0$, it does not matter what $\dot x_1 > 0$ is. $x_1$ will act, and any nudge effect results in $\dot x_2$ experiencing a net increase over a period, thus $x_2$ eventually acts. 

In the exact opposite case, we consider when $\sigma_A\alpha_2 - \mu_S \sigma_S \leq \sigma_S$. Since $ \max \gamma_2 = 1$, we'd then have that $[\sigma_A\alpha_2 +  \sigma_S(\gamma_i - \mu_S)] \leq 0$. Then, it again does not matter what $\dot x_1 > 0$ is. Regardless of when or how fast $x_1$ acts, $x_2$ will never act as it is always non-increasing.

In between these extremes, we can then ask our (hopefully now natural) question: Given that all other parameters are fixed, can we find an $\alpha_1$ such that $x_2$ acts? If so, what is the smallest such value? 

In other words, how small a stimulus can $x_1$ provide such that $x_2$ acts? 

This is intractable. To see why, the first step would be to solve for $T$ satisfying $M > 0$, or \[
B +C  < A_2(2T) + Be^{-r(2T)} + Ce^{-2r(2T)}
\]
But as $T$ is both linear and exponential in two different forms, no elementary function solution exists. If one did, we could easily write $T = \frac{2}{A_1}\tanh(\tau)$ from an earlier proposition and solve for $\alpha_1$ inside of $A_1$, though it would be messy. Alas, this is not possible. 

This hints that many interesting questions about this system are not just difficult, but analytically intractable. As another example: Can we show that $x_2$'s slowest action is its first? This is intuitively clear, \textit{provable} if we suppose that $x_1$ ``ignores'' $x_2$'s nudge effect, which should be a strictly ``worse'' setting, but is tricky at best under our normal conditions. 

Nevertheless, we can handle one non-trivial special case of our first question above. To repeat it: Given that all other parameters are fixed, can we find an $\alpha_1$ such that $x_2$ acts? If so, what is the smallest such value? 

As well as serving as a partial result, this tells us that many attempts to turn intractable questions into tractable ones still leave us with answers that aren't very illustrative.

This special case is when $B = 0$ (alternatively, one could compute also for $C=0$). Recall that $B=-\frac{1}{r}[(\sigma_A\alpha_2-\sigma_S\mu_S)\sigma_C+\sigma_S\sigma_C\mu_C]$, so this is true when \[
\alpha_2 = \frac{\sigma_S(\mu_S - \mu_C)}{\sigma_A}\]
Given certain parameters, to achieve $B = 0$ we may ask for the impossibility that $|\alpha_2| > 1$, so ignore these cases. There certainly are combinations of parameters for which the expression for $\alpha_2$ is valid though. Furthermore, there are combinations of the above parameters that allow $\alpha_2$ to take every value in $[-1,1]$.
\begin{prop}
    Suppose $(h_1)-(h_4)$. Say we have $\alpha_2 = \frac{\sigma_S(\mu_S - \mu_C)}{\sigma_A} \in [-1,1]$, then the minimum $\alpha_1$ such that $x_2$ acts is \[
\alpha_1 
= 
\frac
{
  \tanh^{-1}(\tau)
  +
  \mu_S\sigma_S
  \lt(\frac{C}{2A} + \frac{1}{4r}W\!\lt(-\frac{2rC}{A}e^{-\frac{2rC}{A}}\rt)\rt)
}
{
  \sigma_A
  \lt(\frac{C}{2A} + \frac{1}{4r}W\!\lt(-\frac{2rC}{A}e^{-\frac{2rC}{A}}\rt)\rt)
}
\]
where $W$ is the Lambert $W$ function.
\end{prop}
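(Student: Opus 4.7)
The plan is to exploit the full classification already established: by Corollary \ref{Cor:PartialActionFull} and Theorem \ref{thm:FullAction}, under $(h_1)$--$(h_4)$ the individual $x_2$ acts if and only if $M > 0$. Since $A$, $B$, $C$ depend only on $\alpha_2$ and the global parameters, the $\alpha_1$-dependence enters solely through the period $T = \tanh^{-1}(\tau)/A_1$ (by Corollary \ref{period}), with $A_1 = (\sigma_A\alpha_1 - \sigma_S\mu_S)\sigma_C\mu_C$. So I will (i) set $M = 0$ under the special condition $B = 0$, (ii) solve the resulting transcendental equation in $T$ using the Lambert $W$ function, and (iii) back-solve $T = \tanh^{-1}(\tau)/A_1$ for $\alpha_1$.

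For step (i), substituting $B = 0$ collapses the invariant to $M(T) = AT + C(e^{-2rT} - 1)$. Observe that $M(0) = 0$ and $M''(T) = 4r^2 C e^{-2rT} < 0$ since $C < 0$, so $M$ is strictly concave, has at most one positive zero $T^\ast$, and satisfies $M > 0$ precisely on $(0, T^\ast)$ whenever $M'(0) = A - 2rC > 0$ (which under $B = 0$ reduces to $\mu_C < 1$). Since larger $\alpha_1$ shrinks $T$ monotonically, the infimum of $\alpha_1$ for which $x_2$ acts corresponds exactly to the boundary $T = T^\ast$.

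For step (ii), I rewrite $M = 0$ as $AT = C(1 - e^{-2rT})$, substitute $u = e^{-2rT}$ to obtain $\ln u = \frac{2rC}{A}(u - 1)$, and regroup and exponentiate to get $u\, e^{-\frac{2rC}{A}u} = e^{-\frac{2rC}{A}}$. Setting $v = -\frac{2rC}{A}u$ puts this in canonical form $v e^v = -\frac{2rC}{A}e^{-\frac{2rC}{A}}$, which is solved by $v = W\!\lt(-\frac{2rC}{A}e^{-\frac{2rC}{A}}\rt)$. Undoing both substitutions yields an explicit $T^\ast$ as a linear combination of $C/A$ and the Lambert-$W$ term. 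For step (iii), I substitute $T^\ast$ into $\alpha_1 = \sigma_S\mu_S/\sigma_A + \tanh^{-1}(\tau)/(\sigma_A\sigma_C\mu_C T^\ast)$ (the direct solution of $T = \tanh^{-1}(\tau)/A_1$) and clear denominators to arrive at the displayed closed form.

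The main technical obstacle is branch selection for $W$. Under our hypotheses both $A$ and $C$ are negative, so $2rC/A > 0$ and the argument $-\frac{2rC}{A}e^{-\frac{2rC}{A}}$ has the form $-z e^{-z}$ with $z > 0$; such arguments always lie in $[-1/e, 0)$, where $W$ is double-valued. The principal branch $W_0$ recovers only the trivial zero $T = 0$, so $T^\ast$ is produced by the $W_{-1}$ branch. One must therefore verify both that this branch indeed yields $T^\ast > 0$ and that the resulting $\alpha_1$ is an admissible attitude (i.e., in $(-1,1)$) given the constraint $\alpha_2 = \sigma_S(\mu_S - \mu_C)/\sigma_A \in [-1,1]$ already imposed in the hypothesis.
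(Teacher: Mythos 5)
Your overall strategy matches the paper's: set $B=0$, reduce $M=0$ to a transcendental equation in $T$, solve it with the Lambert $W$ function by massaging it into the canonical form $Ze^{Z}=-\frac{2rC}{A}e^{-2rC/A}$ (the paper substitutes $U=1-\frac{2A}{C}T$, you substitute $u=e^{-2rT}$; these are equivalent routes), and then invert $T=\tanh^{-1}(\tau)/A_1$ for $\alpha_1$. Your additions --- the concavity argument showing $M>0$ exactly on $(0,T^{*})$ when $A-2rC>0$, and the monotonicity of $T$ in $\alpha_1$ identifying $T=T^{*}$ as the boundary --- are genuinely worthwhile, since the paper's appendix performs only the algebra and never argues that the root it finds is the relevant one.

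There are, however, two concrete problems. First, your branch selection is backwards in the only regime that matters. With $B=0$ one has $A=-\sigma_S\sigma_C\mu_C^{2}$ and $C=-\sigma_S\sigma_C/(2r)$, so $z:=2rC/A=1/\mu_C^{2}$; your own admissibility condition $A-2rC>0$ forces $\mu_C<1$, hence $z>1$. For $z>1$ the trivial root $Z=-z<-1$ is the one lying on $W_{-1}$, so it is the \emph{principal} branch $W_0$ that produces the nontrivial solution: $W_0(-ze^{-z})\in(-1,0)$ gives $U^{*}\in(0,1)$ and hence $T^{*}>0$, whereas taking $W_{-1}$ as you assert returns $U=1$, i.e.\ $T=0$. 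Second, there is a normalization mismatch: working directly from the definition $M=AT+Be^{-rT}+Ce^{-2rT}-B-C$, your computation yields $T^{*}=\frac{C}{A}+\frac{1}{2r}W(\cdot)$, whereas the stated formula carries $\frac{C}{2A}+\frac{1}{4r}W(\cdot)$; the paper's appendix actually solves $2AT+Ce^{-4rT}=C$, i.e.\ the invariant evaluated over $2T$. As outlined, your derivation therefore does not land on the displayed closed form --- you must either adopt that convention or reconcile the factor of two. (Your inversion $\alpha_1=\sigma_S\mu_S/\sigma_A+\tanh^{-1}(\tau)/(\sigma_A\sigma_C\mu_C T^{*})$ is the correct reading of Corollary~\ref{period}.)
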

To see the full computation, refer to \ref{app:lambert}.

The Lambert $W$ function is defined as the inverse to $xe^x$. It is needless to say that this expression is highly problematic to work with for a multitude of reasons. Mainly, these are the sheer size and the required use of Lambert $W$. 

We could have also take then case where $C = 0$ for the same reason why $B = 0$ is acceptable, but this gives a similarly opaque expression.

As mentioned earlier, in the expression $B +C  < A_2(2T) + Be^{-r(2T)} + Ce^{-2r(2T)}$, we cannot solve for $T$. In the special cases where we can, so where $B$ or $C$ is 0, the expression we could obtain would take a difficult form as evidenced above. This means that we, in general, cannot derive a nice expression for the amount of time it takes $x_2$ to act. 

\section{Discussion}\label{sec:Discussion}
The main result here is a full characterization of the motivational dynamics taking place between a pair of individuals with different propensities to act. The classification of behavior can be done entirely from understanding global parameters (i.e. those parameters that describe the relationship of the two individuals) and from understanding the individual characteristics of the individuals (i.e. propensity to act) under the assumptions of the model presented in \cite{schwarze2024planned}. This classification of the behavior in the two individual game can be described as a partition of the parameter space, described bu the critical case of the invariant described in subsection \ref{subsec:Invariant}.  In the two individual case $\sigma_A\alpha_i-\sigma_S\mu_s<0$ for both $i=1$ and $i=2$ then we know we are in no action state trivially. Therefore the region
\[\{(\alpha_1,\alpha_2)\in [0,1]^2; \sigma_A\alpha_i-\sigma_S\mu_s= 0 \text{ and }\sigma_A\alpha_j-\sigma_S\mu_s\leq 0 \text{ for }i\neq j\}\]
Is the boundary between the no action state and the rest of the parameter space. This boundary lies between the no action state and the partial action state except for at the corner ( $\sigma_A\alpha_j-\sigma_S\mu_s= 0 \text{ and }\sigma_A\alpha_2-\sigma_S\mu_s= 0 $) where the no action, partial action, and full action regions all meet.

The boundary between the partial action state and the full action state is given by the invariant $M$. Note that, although $M$ depends on the period $T$ we have shown that if the player with the lower $\alpha$ value acts once, it will surely act again, therefore it is sufficient to use the time to first action by the player with the higher $\alpha$ value as $T$. The time to first action will the be the period for that player until the other player acts for the first time. The time to first action, although it may be difficult to compute, it determined only the the global parameters and by $\alpha_i$ so the can use the sublevel set, super level set, and level sets of $M=0$ to partition the parameter space analytically.

The region
$\{(\alpha_1,\alpha_2)\in [0,1]^2; M=0\}$ describes the boundary between the partial action state and the full action state. Notice that the point on this boundary where $\alpha_1=\alpha_2$ is also on the boundary between the no action state and partial action state. This creates the critical point in the parameter space where all three regions meet.

As a way of demonstrating the power of this result, we also present the result of some numerical experiments carried out by scanning the parameter space of $(\alpha_1,...,\alpha_n)\in [0,1]^n$ for several points in the global parameter space (i.e. several selections of the parameter $\sigma_s, \sigma_c, r, \mu_c,$ and $\tau$. First, in the case, that $n=2$, the entire scan can be visualized on the unit square, and each simulation solution to the system can be classified as ``total action", ``partial action," or ``no action" (Fig \ref{fig:twoindividualpartition}).

\begin{figure}[h]
    \centering
    \includegraphics[width=\linewidth]{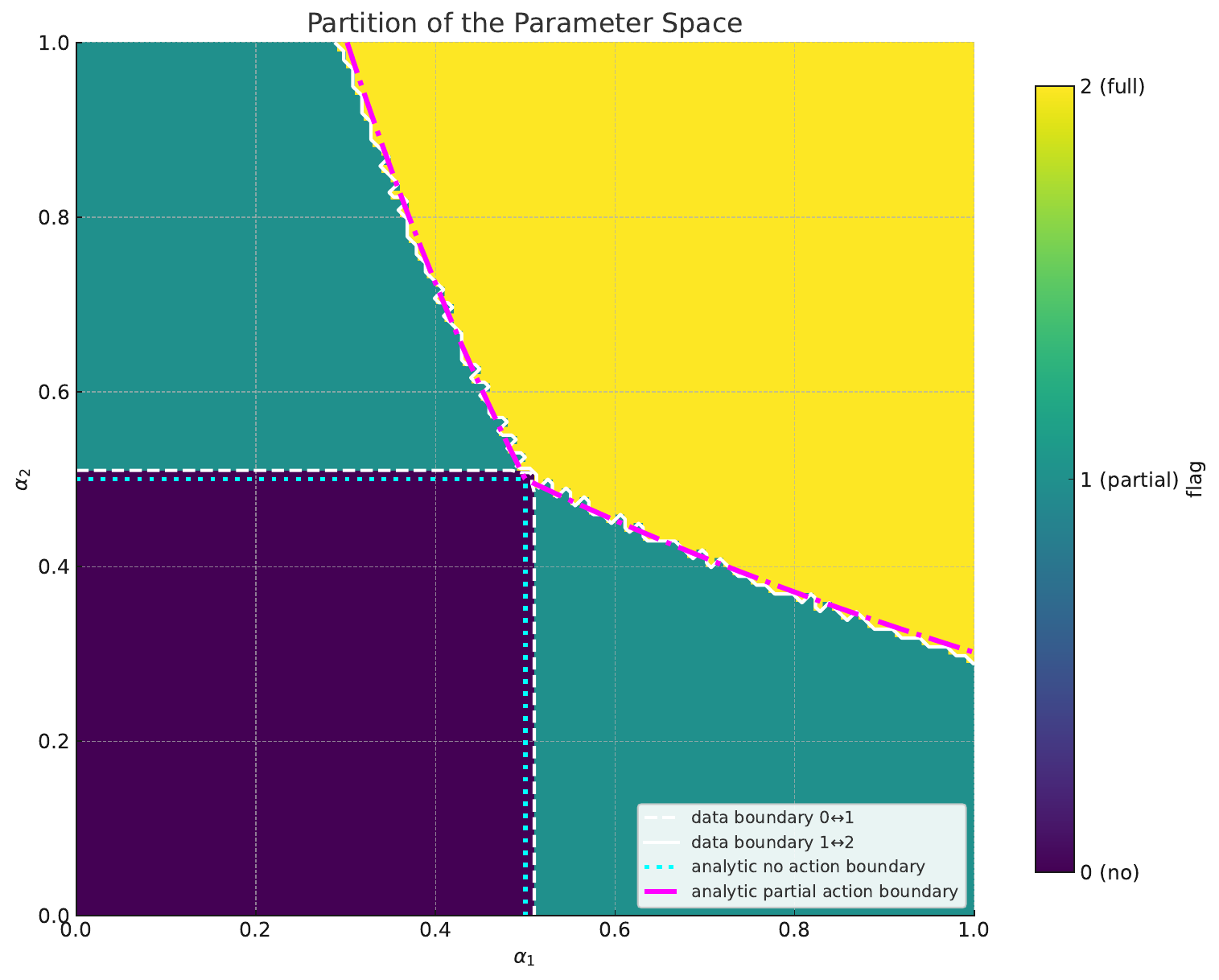}
    \caption{Classification of solutions observed in the parameter space $(\alpha_1,\alpha_2)\in [0,1]^2$ with $\sigma_s= 0.5, \sigma_C= 0.5,r=0.86,\mu_c=0.05$ and $\tau = 0.8$. In dotted lines, the analytical boundaries between regions are shown to nearly perfectly overlap the observed boundaries between qualitatively different behaviors from the simulation.}
    \label{fig:twoindividualpartition}
\end{figure}

\begin{figure}
    \centering
    \includegraphics[width=\linewidth]{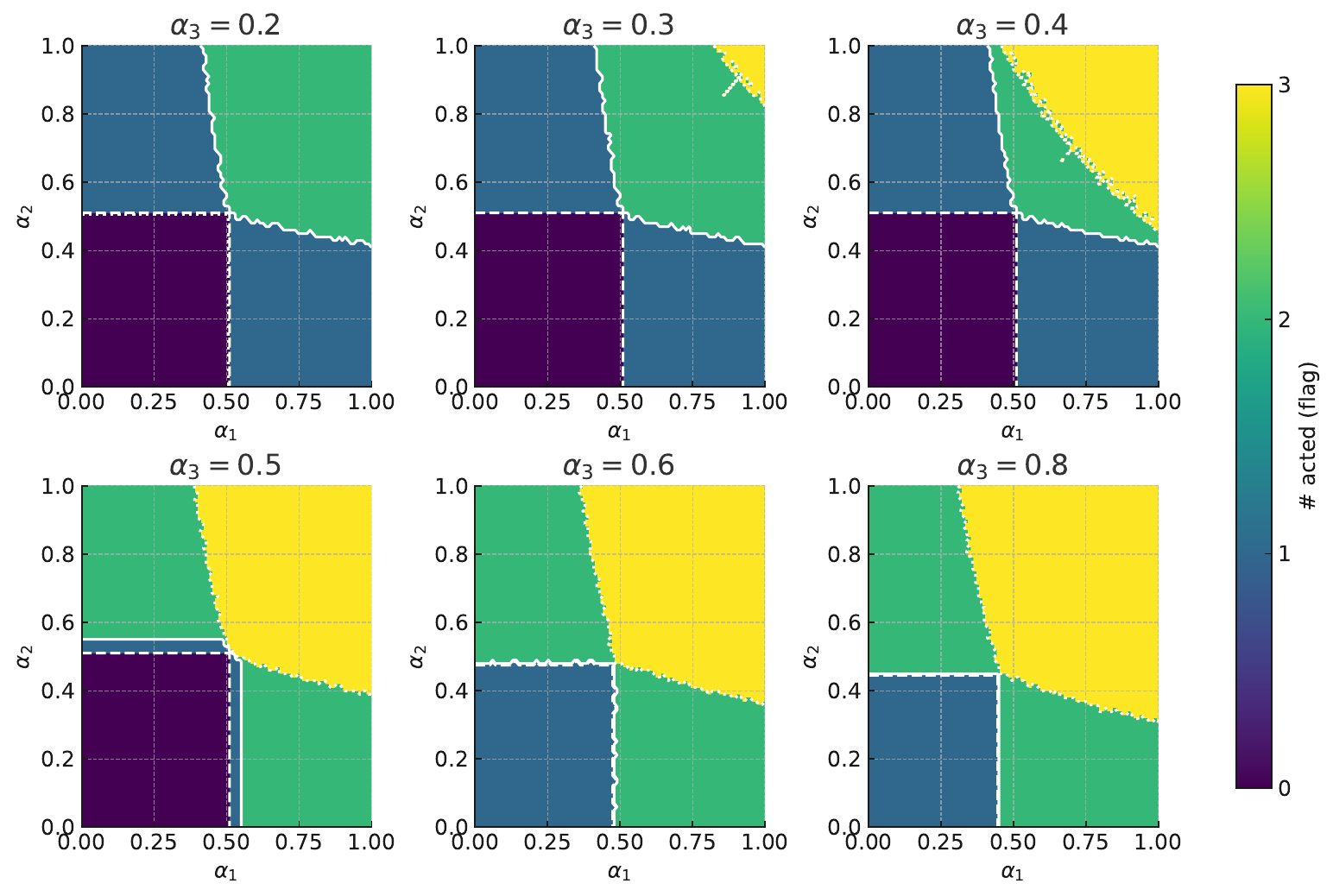}
    \caption{From simulation, behavior of the three player system can be categorized according to the position of the system in the parameter space. For different choices of $\alpha_3$, the action of the system is classified by the number of players acting for each choice of $(\alpha_1,\alpha_2)\in [0,1]^2$.}
    \label{fig:threeIndividualPartition}
\end{figure}

This result is more than an interesting investigation into a model, it is a meaningful step forward in how we understand the theory of planned behavior.   The results of this paper point to the existence of a continuous partition of the parameter space which separate the system into qualitatively different regimes. For the two player case that can be shown with certainty and the partition can be found analytically. Although we do not prove it here, we have given numerical support for the existence of such a partition in the multiplayer case.

The key insights that allowed this step forward are two fold: The first is that in the isolated two individual systems, if a individual acts once they will act again under the assumptions of the model. The other insight is the existence of the invariant descriptor of the change in internal attitude when a player's partner acts. These insights, not only allow us to get a better grip on a hybrid system whose behavior is difficult to describe, but also provide a step towards relating, mathematically, threshold driven action and emergent properties of collective action. Being able to partition the parameter space as we have analytically in the 2-player case and numerically in the 3 player case allows us to simplify the system into several behavioral regimes.

Furthermore, because the two player case is foundational to any multiplayer interaction. The classification of the two player interaction, we believe, can be used as a stepping stone for further classification. Linking the ``no action," ``partial action," and ``Full action" to different kinds of ``dampening," ``neutral," or ``excititory" associations in the full multiplayer game will allow us to leverage the analytical classification we have here to potentially build a deeper understanding of emergent properties of collective behavior, and in particular, the ways that community structure can be a determining factor in the relationship between collective action, threshold-driven action, and the underlying psychological state.

\bibliographystyle{elsarticle-num}
\bibliography{TPBModelExtension}

\appendix
\appendixpage

\section{Derivation of Critical Point in Interval}\label{app:criticalpoint}

Consider the $n=2$ case. Suppose we have that $\sigma_A \alpha_1 - \sigma_S \mu_S > 0$ while $\sigma_A \alpha_2 - \sigma_S \mu_S < 0$. Then, in the case where $\dot x_2(T) > 0$, we can derive a critical time $t_{crit}$ where for each $k$ before $x_2$ acts (if it does at all), $x_2(kT + t_{crit})$ achieves a local maximum on $[kT, (k+1)T]$.

Notice in the ODE for the position of individual $i$ \[
\dot x_i = [\sigma_A \alpha_i + \sigma_S(\gamma_i - \mu_S)]\sigma_C (\gamma_i + \mu_C)(1-x_i)(1+x_i)
\]
that the only term affecting the sign of $\dot x_i$ is \[
\sigma_A \alpha_i + \sigma_S(\gamma_i - \mu_S)
\]
Since we are in the case with only two individuals, $\gamma_1(t) = e^{-rt}$ when $t < T$. We can look for the critical point of $x_1(t)$ in this interval: \[
0 = \sigma_A \alpha_i + \sigma_Se^{-rt} -\sigma_S \mu_S
\]
\[
\mu_S - \frac{\sigma_A \alpha_i}{\sigma_S} = e^{-rt}
\]
\[
-\frac{1}{r}\ln\lt(\lt|\mu_S - \frac{\sigma_A \alpha_i}{\sigma_S}\rt|\rt) = t_{crit}
\]
Note that $t_{crit} < T$, and as mentioned above, before $x_2$ acts, there is a critical time $t_{crit} + kT \in [kT, (k+1)T]$

\section{Derivation of Position of $x_*$}
We'd like to show that $x_*$ indeed falls in $(-1,0)$. Take the following to be a function of $M$\[
x_*(M) = -\frac{1  - \sqrt{1 - (\tanh M)^2}}{\tanh M}
\]
Its derivative is the following
\[
\frac{\text{sech}^2(M) \lt(\sqrt{1 - \tanh^{2}(M)} - 1\rt)}{\tanh^2(M) \sqrt{1 - \tanh^{2}(M)}}
\]
Analyzing this in pieces:
\[
\frac{\text{sech}^2(M)}{\tanh^2(M)} = \frac{\cosh^2(M)}{\cosh^2(M)\sinh^2(M)} = \text{csch}^2(M)
\]
is positive for $M > 0$, and since $0 < \sqrt{1 - \tanh^2(M)} < 1$, we have \[
\frac{\sqrt{1 - \tanh^{2}(M)} - 1}{ \sqrt{1 - \tanh^{2}(M)}} < 0
\]
Hence, $x_*(M)$ is decreasing for $M > 0$, furthermore it is differentiable everywhere but 0, at which the limit from both sides is 0, and the limit as $M \to \infty$ is $-1$.

\newpage

\section{Alternate Proof of Action with Banach Fixed Point Theorem}

\begin{lemma}
$\wt f(x) : = \begin{cases}
    f(x)  & x \in (-1,1) \\
    \pm 1 & x = \pm1
\end{cases}$ continuously extends $f$ to $[-1,1]$
\end{lemma}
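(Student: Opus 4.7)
The plan is to verify continuity in two pieces: on the open interval $(-1,1)$ and at the two endpoints $\pm 1$. For the interior, $\wt f$ coincides with $f$, and $f(x)=\tanh(M+\tanh^{-1}(x))$ is a composition of continuous functions ($\tanh^{-1}$ maps $(-1,1)\to\R$ continuously, the shift by $M$ is continuous, and $\tanh$ is continuous on $\R$), so no work is needed there.

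The substantive step is checking the two boundary limits. Rather than working through the $\tanh\circ\tanh^{-1}$ composition (which requires a limit of the form $\tanh(\pm\infty)$), I would use the rational form derived in the proof of Theorem~1, namely
\[
f(x)=\frac{x+\tanh(M)}{1+x\tanh(M)}.
\]
Since $M>0$ we have $\tanh(M)\in(0,1)$, so the denominator $1+x\tanh(M)$ satisfies $1+x\tanh(M)\ge 1-\tanh(M)>0$ for every $x\in[-1,1]$; in particular, the rational expression is continuous on all of $[-1,1]$. Taking one-sided limits, I expect
\[
\lim_{x\to 1^-} f(x)=\frac{1+\tanh(M)}{1+\tanh(M)}=1,\qquad \lim_{x\to -1^+} f(x)=\frac{-1+\tanh(M)}{1-\tanh(M)}=-1,
\]
which exactly match the values $\wt f(\pm 1)=\pm 1$ prescribed by the definition. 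This gives continuity at the endpoints.

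There is no real obstacle here; the only thing to be slightly careful about is confirming that the denominator does not vanish on $[-1,1]$ (which, as noted above, follows from $\tanh(M)<1$), so that the rational form is a legitimate continuous extension and the one-sided limits may be evaluated by direct substitution.
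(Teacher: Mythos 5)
Your proof is correct, but it reaches the endpoint limits by a different route than the paper. The paper works directly with the composition $f(x)=\tanh(M+\tanh^{-1}(x))$: as $x\to 1^-$ the inner term $\tanh^{-1}(x)$ diverges to $+\infty$, so the limit reduces to $\lim_{u\to\infty}\tanh(u)=1$ (with the $x\to-1^+$ case implicitly analogous). You instead invoke the M\"obius form $f(x)=\frac{x+\tanh M}{1+x\tanh M}$ from the proof of Theorem~1, observe that the denominator is bounded below by $1-\tanh M>0$ on all of $[-1,1]$, and read off both one-sided limits by direct substitution. Your route buys a cleaner, symmetric treatment of both endpoints with no improper limits, and it makes the non-vanishing of the denominator (the only real thing to check) explicit; the paper's route is self-contained in the sense that it does not lean on the algebraic identity established earlier, and its proof goes on to record two additional facts you do not address --- that $x=\pm1$ are fixed points consistent with the ODE ($\dot x=0$ there) and that the difference quotient $\frac{\widetilde f(1)-\widetilde f(x^*)}{1-x^*}<1$ --- but those are groundwork for the Banach fixed-point argument that follows rather than part of the continuity claim itself, so their absence is not a gap in your proof of this lemma.
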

\begin{proof} Using the same notation as earlier, we check the limit of
\[
\lim_{x \to 1^-} \tanh(M+ \tanh^{-1}(x))
\]
Since $\tanh^{-1}(x) \to \infty$ as $x \to 1^-$, we are taking \[
\lim_{u \to \infty} \tanh(u) \to 1
\]
as desired. Thus, we can continuously extend $f(x)$ to $\wt{f(x)}$ by specifying $\wt{f(1)} = 1$, and this agrees with the original ODE: \[
\dot x = [\sigma_A \alpha_i + \sigma_S(\gamma - \mu_S)] \sigma_C(\gamma + \mu_C)(1-x)(1+x)
\]
as $x = 1 \implies \dot x = 0$, so this is a fixed point. And picking any other point $x^*$, we have \[
\frac{\wt{f(1)} - \wt{f(x^*)}}{1 - x^*} = \frac{1 - f(x^*)}{1- x^*} < 1
\]
since $\wt{f(x^*)} = f(x^*) >x^*$. \end{proof}
Intuitively, this should be fairly clear since $\dot x_2 = 0$ at both $1$ and $-1$.
\begin{lemma}
$\wt f([-1,1]) = [-1,1]$
\end{lemma}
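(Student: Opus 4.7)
The plan is to use continuity plus the endpoint values supplied by the previous lemma, and to argue via the intermediate value theorem (or, equivalently, via connectedness of the continuous image).

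First I would record the forward inclusion $\widetilde{f}([-1,1]) \subseteq [-1,1]$. For $x \in (-1,1)$ we have $\widetilde{f}(x) = \tanh(M + \tanh^{-1}(x)) \in (-1,1)$ because $\tanh$ takes values in $(-1,1)$, and the endpoint values $\widetilde{f}(\pm 1) = \pm 1$ were pinned down in the preceding lemma. Hence the image is contained in $[-1,1]$.

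For the reverse inclusion, I would invoke the previous lemma to conclude that $\widetilde{f}$ is continuous on the compact connected interval $[-1,1]$, so its image is a compact connected subset of $\mathbb{R}$, i.e.\ a closed bounded interval. Since this image contains $\widetilde{f}(-1) = -1$ and $\widetilde{f}(1) = 1$, it must contain the entire interval $[-1,1]$ joining them. Combining the two inclusions gives $\widetilde{f}([-1,1]) = [-1,1]$.

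There is no serious obstacle here, since all the analytic work is already done in the continuous-extension lemma; the statement is a two-line topological consequence. If a more hands-on presentation were desired, one could alternatively note that $\widetilde{f}$ is strictly increasing (both $\tanh^{-1}$ and $\tanh$ are strictly increasing, and the shift by $M$ is a monotone bijection of $\mathbb{R}$), so $\widetilde{f}$ is a continuous strictly increasing map from $[-1,1]$ to $[-1,1]$ with $\widetilde{f}(-1) = -1$ and $\widetilde{f}(1) = 1$, which forces surjectivity by the intermediate value theorem.
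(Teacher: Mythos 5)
Your proof is correct, and it actually proves more of the stated lemma than the paper does, by a different route. For the inclusion $\wt f([-1,1])\subseteq[-1,1]$ you argue directly from the range of $\tanh$: since $\wt f(x)=\tanh(M+\tanh^{-1}(x))\in(-1,1)$ for interior $x$ and $\wt f(\pm1)=\pm1$ by definition, the containment is immediate. The paper instead runs a contradiction argument on $[x_*,1]$: if $\wt f(x')>1$ for some $x'<1$, continuity would force an interior point $u$ with $\wt f(u)=1$, whence $\wt f'(u)=\frac{1-\wt f(u)^2}{1-u^2}=0$, contradicting strict monotonicity; your direct observation that $\tanh$ never attains $\pm1$ at a finite argument short-circuits this and is cleaner. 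For the reverse inclusion (surjectivity onto $[-1,1]$), you supply a connectedness/intermediate-value argument using the endpoint values $\wt f(\pm1)=\pm1$; the paper's proof does not address this direction at all, even though the lemma is stated as an equality — what the subsequent Banach fixed-point argument actually needs is only the invariance $\wt f([-1,1])\subseteq[-1,1]$ (indeed invariance of $[x_*+\eps,1]$), which is why the omission is harmless there. In short, your version is both more elementary and more faithful to the literal statement; the paper's version is tailored to the one containment it later uses.
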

\begin{proof}
We consider $\wt f$ on $[u_*, 1]$. $\wt f(1) = 1$. For any other point $x$, we know $\wt f(x) > x$. Suppose for contradiction that $\wt f(x') > 1$ for $x' < 1$, since $\wt f$ is differentiable hence continuous, there is a point $u$ where $\wt f(u) = 1$, but then $\wt f'(u) = 0$, so this is not possible. \end{proof}

\begin{theorem}
Given any $x_2(0)$ and $(h_1)$-$(h_4)$, $M>0$, then $x_2$ must eventually act.
\end{theorem}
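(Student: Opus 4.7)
The plan is to re-cast the dynamics as iteration of the extended map $\wt f$ on the compact interval $[-1,1]$ and then invoke the compact-space version of the Banach fixed point theorem (the one that needs only $|\wt f(x)-\wt f(y)|<|x-y|$ pointwise rather than a uniform Lipschitz constant $c<1$). The key observation is that, because $M>0$, we have $f(x)>x$ on $(-1,1)$ (this is exactly the invariant developed in Proposition \ref{Prop: invariant} applied to every period), so the only fixed points of $\wt f$ on $[-1,1]$ are $\pm 1$. The iteration $x_n := \wt f^{\,n}(x_2(T))$ represents the position of $x_2$ at the end of the $n$-th period of $x_1$ under the convention that the threshold is ignored; once I show $x_n \to 1$, monotonicity of $\wt f$ and $\tau<1$ will force $x_n > \tau$ for some $n$, which translates back to $x_2$ actually crossing the threshold in finite time.

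First I would split the interval. On $[x_*,1]$, Theorem 1 gives $\wt f'(x) \le 1$ with equality only at $x_*$, so the mean value theorem yields strict pointwise contraction $|\wt f(x)-\wt f(y)|<|x-y|$ on $[x_*,1]$; combined with the two preceding lemmas ($\wt f$ continuous and $\wt f([-1,1])\subseteq [-1,1]$, and in particular $\wt f([x_*,1])\subseteq[x_*,1]$ since $\wt f$ is increasing and $\wt f(x_*)>x_*$, $\wt f(1)=1$), the compact-set Banach argument gives a unique fixed point in $[x_*,1]$, which must be $1$, and every orbit starting in $[x_*,1]$ converges to $1$. Second, I would handle the case $x_2(T)\in[-1,x_*)$ by a pure monotonicity argument: the orbit is strictly increasing (since $\wt f(x)>x$ on $(-1,1)$), so if it never entered $[x_*,1]$ it would converge to some $L\in[-1,x_*]$, and continuity of $\wt f$ would force $\wt f(L)=L$; but the only fixed point of $\wt f$ in $[-1,x_*]$ is $-1$, which is impossible for an increasing sequence with $x_2(T)>-1$. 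Hence the orbit enters $[x_*,1]$ after finitely many periods and from there converges to $1$ by the previous step.

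Finally I would close the loop between the iteration and the real dynamics. The function $f=\wt f\big|_{(-1,1)}$ only records where $x_2$ \emph{would} be at the end of a period if it did not act; but since $x_n\to 1>\tau$, there is a first $n$ with $x_n>\tau$, and because $x_2(\,\cdot\,)$ is continuous between action events, the intermediate value theorem places an actual threshold crossing inside the corresponding period, at which point $x_2$ acts. This gives the conclusion for arbitrary $x_2(0)<\tau$ (and hence arbitrary $x_2(T)\in(-1,\tau)$) without needing any quantitative bound on the number of periods.

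The main obstacle, and the reason this is only an alternate proof, is that the contraction on $[x_*,1]$ is not uniform: $\wt f'(x_*)=1$, so standard Banach with a fixed constant $c<1$ does not apply, and one must invoke the compact-set version (unique fixed point plus convergence of all orbits) that uses only strict pointwise contraction. The other delicate point is verifying $\wt f(x_*)>x_*$ so that $[x_*,1]$ is genuinely invariant; this is immediate from $f(x)>x$ on $(-1,1)$, which follows from $M>0$ exactly as in the invariant of Section \ref{subsec:Invariant}. Everything else (monotone convergence on $[-1,x_*)$, ruling out the fixed point $-1$, and translating $x_n>\tau$ into a genuine action) is routine once the map-iteration viewpoint is in place.
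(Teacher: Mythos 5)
Your proposal is correct and follows essentially the same route as the paper's own (appendix) proof: extend $f$ continuously to $[-1,1]$, note that the only fixed points are $\pm 1$, apply the compact-interval form of the Banach fixed point theorem above $x_*$, and handle the region below $x_*$ separately before translating the end-of-period iteration back into a genuine threshold crossing. The only differences are minor refinements --- you replace the paper's appeal to the quantitative bound of Lemma 2 for orbits starting below $x_*$ with a soft monotone-convergence argument, and you work on $[x_*,1]$ via strict pointwise contraction rather than truncating to $[x_*+\eps,1]$ --- neither of which changes the substance of the argument.
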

\begin{proof} Now, since $\wt f$ is a contraction, we are able to apply the Banach fixed point theorem on $[x_* + \eps, 1]$, and since $1$ is the unique fixed point greater than all $\tau$, we know that $x_2$ will eventually act. If $x_2(T,x_{init})$ is less than or equal to $x_*$, we know it will eventually cross $x_*$ with upper bound on the time given by Lemma 2. \end{proof}

\section{Computing a Minimal $\alpha$ for Action, $B = 0$}\label{app:lambert}

Here is what we're left with after setting $B = 0$:
\[
2AT + C e^{-4rT} = C
\]
Our goal is to use the Lambert $W$ function to obtain a solution for $T$. First, we isolate the exponential term:
\[
e^{-4rT} = 1 - \frac{2A}{C}T
\]
and to eventually apply the Lambert $W$ function, we're going to rewrite our expression in terms of $U$ below, and not $T$. So let
\[
U = 1 - \frac{2A}{C}T
\]
Solving for $T$ in terms of $U$, we obtain
\[
T = \frac{C}{2A}(1 - U)
\]
Substituting $T$ back into the exponent, we get
\[
e^{-4rT}
= e^{-4r\lt(\frac{C}{2A}(1 - U)\rt)}
= e^{-\frac{2rC}{A}}\,e^{\frac{2rC}{A}U}
\]
So, we can rewrite $U$ as the following:
\[
U = e^{-\frac{2rC}{A}}\,e^{\frac{2rC}{A}U}
\]
Multiply both sides by the exponential factor
\[
U\,e^{-\frac{2rC}{A}U} = e^{-\frac{2rC}{A}}
\]
Scale and define $Z$ for the Lambert $W$ form
\[
-\frac{2rC}{A}\,U\,e^{-\frac{2rC}{A}U}
= -\frac{2rC}{A}\,e^{-\frac{2rC}{A}}
\quad\text{ where }Z = -\frac{2rC}{A}U
\]
Recognize the standard form for Lambert $W$
\[
Z e^Z = -\frac{2rC}{A}\,e^{-\frac{2rC}{A}}
\]
Apply the Lambert $W$ function
\[
 Z = W\!\lt(-\frac{2rC}{A}\,e^{-\frac{2rC}{A}}\rt)
\]
Return to $U$
\[
U = -\frac{A}{2rC}\,W\!\lt(-\frac{2rC}{A}\,e^{-\frac{2rC}{A}}\rt)
\]
Finally solve for $T$
\[
T = \frac{C}{2A}(1 - U)
= \frac{C}{2A}
+ \frac{1}{4r}\,W\!\lt(-\frac{2rC}{A}\,e^{-\frac{2rC}{A}}\rt)
\]
Solving for $\alpha_1$ gives
\[
\alpha_1
=
\frac
{
  \tanh^{-1}(\tau)
  +
  \mu_S\sigma_S
  \lt(\frac{C}{2A} + \frac{1}{4r}W\!\lt(-\frac{2rC}{A}e^{-\frac{2rC}{A}}\rt)\rt)
}
{
  \sigma_A
  \lt(\frac{C}{2A} + \frac{1}{4r}W\!\lt(-\frac{2rC}{A}e^{-\frac{2rC}{A}}\rt)\rt)
}
\]

\end{document}